\documentclass[reqno]{amsart}
\UseRawInputEncoding

\usepackage{amsmath,amsfonts,amssymb,amsthm,amscd,latexsym,cite,comment}

\usepackage{mathrsfs}
\usepackage{color}
\usepackage{enumerate}
\usepackage{comment}
\usepackage{a4wide}
\theoremstyle{plain}
\newtheorem{thm}{Theorem}[section]
\newtheorem{theorem}[thm]{Theorem}

\newtheorem{lemma}[thm]{Lemma}

\newtheorem{proposition}[thm]{Proposition}

\theoremstyle{definition}

\theoremstyle{remark}
\newtheorem{remark}[thm]{Remark}

\newcommand{\cL}{{\mathcal L}}

\usepackage{color}
 
\newcommand{\norm}[1]{\left\lVert#1\right\rVert}

\newcommand{\tnorm}[1]{\left\vert\kern-0.25ex\left\vert\kern-0.25ex\left\vert #1 
	\right\vert\kern-0.25ex\right\vert\kern-0.25ex\right\vert}

\usepackage{hyperref}					
\hypersetup{colorlinks,
	linkcolor=blue,%
	citecolor=blue}

\usepackage{float}
\newcommand{\R}{\mathbb R}
\newcommand{\Z}{\mathbb Z}
\newcommand{\N}{\mathbb N}
\newcommand{\G}{\mathbb G} 
\newcommand{\one}{\mathbf 1}
\newcommand{\supp}{\operatorname{supp}}

\newcommand{\dd}{\,\mathrm d}

\title[A counterexample to two compactness conjectures]
{A Counterexample to Two Compactness Conjectures of Brudnyi}

\author[Jinghao Huang]{Jinghao Huang}
\address[Jinghao Huang]{Institute for Advanced Study in Mathematics, Harbin Institute of Technology, Harbin 150001, China \emph{E-mail~:} {\tt jinghao.huang@hit.edu.cn} }

\author[Ran Xu]{Ran Xu}
\address[Ran Xu]{Institute for Advanced Study in Mathematics, Harbin Institute of Technology, Harbin 150001, China \emph{E-mail~:} {\tt xxurann@stu.hit.edu.cn} }

\thanks{\footnotesize
Jinghao Huang was supported by the NNSF of China (No. 12031004, 12301160 and  12471134).
}

\subjclass[2020]{Primary 46E30; Secondary 46A16, 46A40, 46A50}
\keywords{quasi-Banach function lattice, compactness criterion, equicontinuity,
order-continuous quasi-norm}

\begin{document}

\begin{abstract}
We construct a quasi-Banach function lattice $X$ on $\R$ whose quasi-norm is
order continuous and invariant under every translation.  Nevertheless, $X$
contains a bounded sequence with  common compact support which is equicontinuous in $X$ and has no norm-convergent subsequence. This provides  a
counterexample to Conjectures~2.5 and 2.11 proposed by Brudnyi in \cite{Brudnyi2015}.
\end{abstract}

\maketitle

\section{Introduction}

The Kolmogorov--Riesz compactness theorem characterizes relatively compact
subsets of $L^p(\R^d)$, $1\le p<\infty$, by boundedness, uniform
continuity under translations, and uniform vanishing at infinity  \cite{Kolmogoroff1931, Riesz1933, Frechet1937}. It is
the $L^p$ counterpart of the classical Arzel\`a--Ascoli theorem \cite{Ascoli1884, Arzela1895} and provides a model
for compactness criteria in more general function spaces; see
\cite{HancheOlsenHolden2010,Krotov2012} for historical accounts and further
references.

Brudnyi studied compactness beyond the Banach setting
in  his class
$\cL$.
This class consists of translation-invariant
Fr\'echet lattices  $(X,\norm{\cdot})$  (briefly, $F$-lattices)  of measurable
functions  on
$\R^d$ whose $F$-norms are absolutely continuous  \cite{Brudnyi2015}. 
Absolute continuity means that, for every
$f\in X$, we have  
\begin{equation}\label{eq:ac-convention}    
    \lim_{n\to\infty}\|f\one_{S_n}\|=0,   \quad\text{whenever }\one_{S_n}\to0
    \text{ almost everywhere},
\end{equation}
where  $(S_n)$ is a sequence of measurable sets and $\one_{S_n}$ stands for the indicator of $S_n$.
Brudnyi proved that, for $X\in\cL$, a subset
$\Sigma\subseteq X$ is precompact if and only if it is
bounded, equicontinuous, and
\emph{norm absolutely equicontinuous}
\cite[Theorem~2.3]{Brudnyi2015}, where the last condition requires 
the convergence in
\eqref{eq:ac-convention} to be uniform over $f\in\Sigma$.

He then asked whether norm absolute equicontinuity
could be replaced by the simpler condition of \emph{vanishing at infinity},
 that is,
\begin{equation*}
\lim_{n\to\infty}\sup_{f\in\Sigma}
    \|f\one_{\{|x|\ge n\}}\|=0.
\end{equation*}
The relevant conjectures are stated below.

\medskip
\emph{
If $X\in\cL$ and $\Sigma\subseteq X$ is bounded,
equicontinuous, and vanishes uniformly at infinity, 
then $\Sigma$ is precompact in $X$   \cite[Conjecture 2.5]{Brudnyi2015}.
}

\medskip
\emph{
If $X$ is a quasi-Banach lattice in $\cL$, then a subset of $X$ is
precompact if and only if 
it
is equicontinuous  and vanishes  at infinity  \cite[Conjecture 2.11]{Brudnyi2015}.
}
\medskip

The boundedness condition cannot, in general, be omitted in the
$F$-lattice case; see \cite[Example 2.8]{Brudnyi2015}
for a counterexample. 
Brudnyi established the corresponding compactness criteria for several important classes, including
Orlicz classes, generalized Orlicz classes, and Banach lattices under appropriate assumptions \cite{Brudnyi2015}. 
Related compactness criteria in noncommutative quasi-Banach spaces were obtained in \cite{HuangNessipbayevSukochevZanin2025}.

In the present paper, however, we show that
Conjecture~2.5 and the sufficiency direction of
Conjecture~2.11 in \cite{Brudnyi2015} fail in general.
We construct the following  counterexample on $\R$ in which
all functions are supported in a fixed compact
interval, so uniform vanishing at infinity
holds automatically.

\begin{theorem}\label{thm:main}
There exist a $1/2$-Banach function lattice $X$ on $\R$, a compact interval
$Q$, and a sequence $(f_n)_{n\ge1}\subseteq X$ such that:
\begin{enumerate}
\item The operators $(\tau_h)_{h\in\R}$ form a strongly continuous group
of surjective isometries of $X$.
\item The quasi-norm of $X$ is order continuous.
\item For every $n$,
\[
 \frac1{16}\le\|f_n\|_X\le1,
 \qquad \supp f_n\subseteq Q.
\]
\item The family $(f_n)_{n\ge1}$ is equicontinuous in $X$:
\[
 \lim_{h\to0}\sup_n\|\tau_hf_n-f_n\|_X=0.
\]
\item The sequence $(f_n)_{n\ge1}$ has no norm-convergent subsequence.
\end{enumerate}
Consequently, $X$ 
belongs to
Brudnyi's class $\cL$, and the family $(f_n)_{n\ge1}$
provides a counterexample to Conjectures~2.5 and
~2.11. 
\end{theorem}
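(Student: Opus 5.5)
The plan is to build $X$ as a translation-invariant quasi-Banach lattice whose quasi-norm is \emph{insensitive to spatial position at small scales but sensitive to the value distribution at very large heights}. Then translation moves a function only a little in $X$, while a sequence of increasingly concentrated spikes stays separated. The main risk is the lattice property in the third paragraph: since $\|\cdot\|_X$ is monotone in $|f|$, this construction fails if $|\tau_hf_n-f_n|$ dominates a fixed fraction of $|f_n|$, and I am not certain the scale cancellation below avoids that.

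Concretely, I would take $f_n=2^n\one_{E_n}$ with $E_n\subseteq Q=[0,1]$ a union of $2^n$ equally spaced tiny intervals, so that $E_n$ has measure comparable to $2^{-n}$ at every scale $\ge 2^{-n}$. I would then define, for measurable $f$,
\[
\|f\|_X=\inf\Bigl\{\Bigl(\sum_k \|g_k\|_{k}^{1/2}\Bigr)^2 : |f|\le\sum_k |g_k|\Bigr\},
\]
where $\|\cdot\|_k$ are translation-invariant lattice norms, each equivalent to $L^1$ restricted to height level $\approx 2^k$. The weights would be chosen so that a single $f_n$ always costs $\ge 1/16$: the $1/2$-convexification gives the constant $(1/4)^2$. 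Standard properties of such inf-convolution constructions then give, in order: $\|\cdot\|_X$ is a $1/2$-norm; $X$ is complete; $X$ is a lattice; and every $\tau_h$ is a surjective isometry, because each $\|\cdot\|_k$ is translation invariant. This yields items (1) and (3), except for strong continuity of $(\tau_h)$.

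Next I would prove order continuity (item (2)). My first approach is to show that simple functions with compact support are dense and that $\|f\one_{S_m}\|_X\to0$ when $\one_{S_m}\to0$ a.e. This should reduce, through the decomposition, to dominated convergence in each $\|\cdot\|_k$ together with a tail estimate $\sum_{k>K}\|g_k\|_k^{1/2}\to0$. Strong continuity of $(\tau_h)$ then follows from order continuity, by density of continuous compactly supported functions and the translation isometry.

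For equicontinuity (item (4)) I would split $\tau_hf_n-f_n$ according to the size of $h$ relative to $2^{-n}$. If $|h|\ge 2^{-n}$, the comb $E_n$ shifted by $h$ overlaps itself in a controlled way. The difference should then be decomposed into pieces $g_k$ at many levels whose $\|\cdot\|_k^{1/2}$-sum is $o(1)$ uniformly in $n$ as $h\to0$; this is exactly where the quasi-convexity exponent $1/2$ must beat the triangle inequality. If $|h|<2^{-n}$, the symmetric difference $E_n\triangle(E_n+h)$ has measure $\lesssim 2^n|h|\cdot 2^{-n}$. I expect this step to be the main obstacle and would attack it first, tuning the weights in $\|\cdot\|_k$ until the estimate closes.

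Finally, for item (5) I would show $\|f_n-f_m\|_X\ge c>0$ for $n\ne m$. For $m>n$, the height $2^m$ part of $f_m$ lives on a set where $f_n$ is at most $2^n$. By monotonicity, $\|f_n-f_m\|_X\ge\|(2^m-2^n)\one_{E_m\setminus E_n}\|_X$, and the lower bound in item (3) applied at level $m$ gives a uniform positive constant. Hence no subsequence is Cauchy. Membership in $\cL$ then follows from items (1) and (2), and since $\supp f_n\subseteq Q$, the family vanishes at infinity trivially, which contradicts both conjectures.
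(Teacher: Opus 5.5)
\paragraph{Item (4) fails for your $f_n$.} The risk you flag in your first paragraph is not a risk but a certainty. It defeats item (4) for every choice of the norms $\|\cdot\|_k$ and of the weights.

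Let $\ell_n$ be the length of the intervals making up $E_n$. Since they are tiny, $2\ell_n\le 2^{-n}$; under your normalization $\ell_n\approx 4^{-n}$. Take $h=\ell_n$, or any $h$ with $\ell_n\le h\le 2^{-n}-\ell_n$. Then $E_n\cap(E_n-h)$ is a null set, so
\[
|\tau_hf_n-f_n|=2^n\bigl(\one_{E_n-h}+\one_{E_n}\bigr)\ge f_n \quad\text{a.e.}
\]
The lattice property alone then gives $\|\tau_hf_n-f_n\|_X\ge\|f_n\|_X\ge1/16$. Since $\ell_n\to0$, such $h$ exist in $(0,\rho)$ for every $\rho>0$. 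Hence $\limsup_{h\to0}\sup_n\|\tau_hf_n-f_n\|_X\ge 1/16$, and no tuning of weights can close your ``main obstacle'' step.

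The same argument applies to any $f_n$ supported on a union of intervals of length $\le\ell_n$ separated by gaps $\ge\ell_n$, with $\ell_n\to0$. A small translate moves the support off itself, and monotonicity forces $\|\tau_hf_n-f_n\|_X\ge\|f_n\|_X$.

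Your bound $|E_n\triangle(E_n+h)|\lesssim|h|$ is also wrong. It equals $2^{n+1}|h|$ for $|h|\le\ell_n$, and $2|E_n|$ for $\ell_n\le|h|\le2^{-n}-\ell_n$.

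\paragraph{The missing ingredient.} You need a size functional that is strongly non-additive, so that the part of $f_n$ displaced by a translation is small in norm even though it is pointwise most of $f_n$. The paper builds exactly this.
\begin{itemize}
\item Lemma~\ref{lem:sparse} gives $A\subseteq\G_N$ with $|A|\le2N/K$ such that any $2K$ translates of $A$ intersect.
\item Consequently the covering submeasure of Lemma~\ref{lem:submeasure} satisfies $\phi(A)=1$ but $\phi\bigl((A+a)\triangle(A+b)\bigr)\le1/K$ for all $a,b$.
\item The space $Y_n$ is the Choquet integral with respect to $\phi$ of the fibrewise suprema, renormalized by Frink--Aoki to a $1/2$-norm, with $\R$ fibred into cells of size $\delta_n$. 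It is translation invariant.
\item For $f_n=w_n^{-1}\psi e_{A_n}$, Proposition~\ref{prop:critical} gives $w_n\|\tau_hf_n-f_n\|_{Y_n}\le2(L|h|+K_n^{-1})$. Any shift of the periodic pattern costs at most $1/K_n$, and the Lipschitz cutoff $\psi$ costs $L|h|$.
\end{itemize}
In this construction $|\tau_hf_n-f_n|$ does dominate $f_n$ on the cells with residue in $A_n\setminus(A_n-a)$, which for typical shifts is most of the support in Lebesgue measure. But that set has $\phi$-size at most $1/K_n$, so the lattice property causes no contradiction.

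\paragraph{Item (5).} The paper does not obtain (5) by separating heights. It uses the weighted $c_0$-intersection $X$. The supports of the $f_n$ have measure $<d_n$, so $f_n\to0$ in $Y_1$, while $\|f_n\|_X\ge w_n\|f_n\|_{Y_n}\ge1/16$. Your pairwise argument for (5) would additionally need $E_m\setminus E_n$ to carry a fixed fraction of $\|f_m\|_X$, which you do not show. This is secondary to the failure of (4).
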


The constructed lattice does not have the Fatou property;
see Remark~\ref{rem:fatou}. Consequently, the compactness results in
\cite{CaetanoGogatishviliOpic2016,GuoZhao2020},
whose function-space assumptions include the Fatou
property, do not apply to $X$.
By \cite[Theorem~2.3]{Brudnyi2015}, the family $\{f_n:n\ge1\}$ is not norm absolutely
equicontinuous, see also Remark~\ref{notabsonorm}.

\section{Preliminaries}  Let $L^0(\mathbb{R})$ denote the vector lattice of all real- or complex-valued Lebesgue measurable functions on $\mathbb{R}$ that are finite almost everywhere, where functions equal almost everywhere are identified. 
The order and lattice operations in $L^0(\mathbb{R})$ are understood pointwise almost everywhere. 
For  a Lebesgue measurable subset  $S$ of
$\mathbb{R}$, let  $|S|$ denote its Lebesgue measure.

We begin by recalling the notions of $F$-norms and
$F$-lattices used in \cite[Section~1]{Brudnyi2015}.
Let $\mathbb{K}=\mathbb{R}$ or $\mathbb{C}$, and let $E$
be a vector space over $\mathbb{K}$. A functional
$\varrho\colon E\to[0,\infty)$ is called an \emph{$F$-norm}
if
\begin{enumerate}
    \item $\varrho(f)=0$ if and only if $f=0$;
    \item $\varrho(-f)=\varrho(f)$ for every $f\in E$;
    \item $\varrho(f+g)\leq\varrho(f)+\varrho(g)$
          for all $f,g\in E$;
\end{enumerate}
and scalar multiplication is continuous with respect to
the metric
\[
    d_{\varrho}(f,g):=\varrho(f-g).
\]
If this metric is complete, $(E,\varrho)$ is called an
\emph{$F$-space}.

An $F$-space $(X,\varrho)$ with
$X\subseteq L^0(\mathbb{R})$ is called an
\emph{$F$-lattice} if it has the following lattice property:
whenever $f\in L^0(\mathbb{R})$ and $g\in X$ satisfy
$|f|\leq|g|$ almost everywhere, one has
\[
    f\in X
    \qquad\text{and}\qquad
    \varrho(f)\leq\varrho(g).
\]

An $F$-lattice $(X,\varrho)$ on $\mathbb{R}$ is said
to be \emph{translation-invariant} if, for every
$h\in\mathbb{R}$, the translation operator
$ 
    \tau_h f:=f(\cdot+h)
$ 
maps $X$ into itself and satisfies
$ 
    \varrho(\tau_h f)=\varrho(f), $ $f\in X 
$ \cite[Definition~1.7]{Brudnyi2015}.

The translation group $(\tau_h)_{h\in\mathbb{R}}$
is said to be \emph{strongly continuous} on a translation-invariant $F$-lattice $X$
if \begin{align*} \lim_{h\to0}\varrho_X(\tau_hf-f)=0, \qquad \text{for every }  f\in X. 
\end{align*}
See \cite[Section~1]{ConejeroMullerPeris2007}
for the corresponding notion of strong continuity
for operator semigroups on $F$-spaces.

The $F$-norm of an $F$-lattice $(E,\varrho)$ is said
to be \emph{order continuous} if
$  \varrho(f_n)\to 0
$
for every sequence $(f_n)_{n\geq1}\subseteq E$ satisfying
$0\leq f_n\downarrow0$ almost everywhere. 
Order continuity implies absolute continuity in the sense of \eqref{eq:ac-convention}. Indeed, if \(\mathbf1_{S_n}\to0\) a.e., then for every $f\in E$, 
\[
u_n:=|f|\mathbf1_{\mathop{\bigcup}\limits_{k\ge n}S_k}\downarrow0
\quad\text{a.e.},
\qquad |f\mathbf1_{S_n}|\le u_n \leq |f|, 
\] and so, 
$  \rho(f\one_{S_n}) \le\rho(u_n)\to0.
$

We next introduce $p$-Banach lattices \cite[p.322]{Kalton1980}, which can be viewed
as a special class of $F$-lattices.
Let $0<p\leq1$. A functional
$\norm{\cdot}_E\colon E\to[0,\infty)$ is called a
\emph{$p$-norm} if
\begin{enumerate}
    \item $\|f\|_E=0$ if and only if $f=0$;
    \item $\|\lambda f\|_E=|\lambda|\|f\|_E$
          for all $\lambda\in\mathbb{K}$ and $f\in E$;
    \item
$
        \|f+g\|_E^p\leq\|f\|_E^p+\|g\|_E^p,
$   for all $f,g\in E$.
\end{enumerate}
A vector space equipped with a $p$-norm is called a
\emph{$p$-Banach space} if it is complete for the metric
\[
    d_p(f,g):=\|f-g\|_E^p.
\]
If, in addition, $E\subseteq L^0(\mathbb{R})$ satisfies
the lattice property above with $\varrho$ replaced by  $p$-norm 
$\norm{\cdot}_E$, then $E$ is called a
\emph{$p$-Banach lattice}.

Every $p$-Banach lattice $(E, \norm{\cdot}_E)$ becomes an $F$-lattice under
the functional
\[
    \varrho_E(f):=\|f\|_E^p.
\]
Then, $\varrho_E$ is a $p$-homogeneous $F$-norm satisfying
\[
\varrho_E(f+g)\leq\varrho_E(f)+\varrho_E(g),
   \qquad
   \varrho_E(\lambda f)=|\lambda|^p\varrho_E(f).
\] 
Since
\[
   \varrho_E(f_n-f)\to0
   \quad\Longleftrightarrow\quad
   \|f_n-f\|_E\to0,
\]
the two functionals induce the same topology.  
 Moreover, the  boundedness, translation isometry, compactness, equicontinuity, order-continuity, and uniform
vanishing conditions used below are unaffected by passing between
these two equivalent conventions.

\section{Proof of Theorem \ref{thm:main} }
\subsection{Sparse sets and covering submeasures}
We begin with a combinatorial lemma that provides
the sparse sets needed for our construction.

For an integer $N\ge1$, let $\G_N:=\Z/N\Z$. 
We write $\mathbb P$ for probability, $\mathbb E$ for expectation,
and $\operatorname{Var}$ for variance.

\begin{lemma}\label{lem:sparse}
Let $K\ge2$ be an integer and set $N:=K^{8K}$. Then, there exists a subset 
$A\subseteq \G_N$ such that
\begin{equation}\label{eq:sparse}
|A|\leq\frac{2N}{K} 
 \qquad
 \text{and} \qquad \bigcap_{j=1}^s(A+g_j)\ne\varnothing 
\end{equation}
for  every integer  $1\le s\le2K$ and  every choice of  $ g_1,\ldots,g_s\in\G_N$. 
Here, 
  $|A|$ denotes the cardinality of $A$, 
and  $A+g:=\{a+g:a\in A\}$, with addition taken modulo $N$.
\end{lemma}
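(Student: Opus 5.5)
We prove Lemma~\ref{lem:sparse} by the probabilistic method. Put $p:=1/K$ and let $A\subseteq\G_N$ be random, with each $x\in\G_N$ included independently with probability $p$. We show that with positive probability both the cardinality bound and the intersection property in \eqref{eq:sparse} hold. It then suffices to show that each of the two failure events has probability strictly less than $1/2$.

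\emph{Cardinality.} We have $|A|\sim\mathrm{Bin}(N,p)$, so $\mathbb E|A|=N/K$ and $\operatorname{Var}|A|\le N/K$. Chebyshev's inequality gives
\[
\mathbb P\Bigl(|A|>\tfrac{2N}{K}\Bigr)\le\mathbb P\Bigl(\bigl||A|-\tfrac NK\bigr|\ge \tfrac NK\Bigr)\le \frac{N/K}{(N/K)^2}=\frac KN<\frac12 .
\]

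\emph{Intersection property.} Repeated shifts do not change the intersection, so it suffices to treat sets $G=\{g_1,\dots,g_s\}\subseteq\G_N$ of distinct elements with $1\le s\le 2K$. Note that $x\in\bigcap_j(A+g_j)$ if and only if $x-G\subseteq A$. For a fixed $x$ this event has probability $p^s\ge K^{-2K}$.

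To get many independent trials, observe that $x-G$ and $y-G$ intersect only if $y-x\in G-G$, a set of at most $s^2\le 4K^2$ elements. A greedy selection therefore produces $M\ge N/(4K^2)$ points $x_1,\dots,x_M$ whose sets $x_i-G$ are pairwise disjoint. The corresponding events are independent, hence
\[
\mathbb P\Bigl(\bigcap_j(A+g_j)=\varnothing\Bigr)\le (1-K^{-2K})^{M}\le \exp\Bigl(-\frac{K^{8K}}{4K^2\,K^{2K}}\Bigr)=\exp\Bigl(-\frac{K^{6K}}{4K^2}\Bigr).
\]

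There are at most $\sum_{s\le 2K}N^s\le 2K\,N^{2K}=2K\cdot K^{16K^2}$ admissible sets $G$. A union bound gives total failure probability at most
\[
\exp\Bigl(\log(2K)+16K^2\log K-\frac{K^{6K}}{4K^2}\Bigr).
\]

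\emph{Main obstacle.} The only real difficulty is this final numerical comparison: the exponent must be below $\log(1/2)$ for every $K\ge2$. The doubly exponential term $K^{6K}/(4K^2)$ should dominate $16K^2\log K+\log(2K)$. For $K=2$ the values are $256$ against roughly $46$. For larger $K$ the ratio only grows, which can be checked by comparing logarithms, say by showing $K^{6K-2}/4\ge 32K^2\log K$ for all $K\ge2$.

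With both failure probabilities below $1/2$, some realization of $A$ satisfies \eqref{eq:sparse}. The choice $N=K^{8K}$ is exactly what leaves the surplus factor $K^{6K}$ in the exponent.
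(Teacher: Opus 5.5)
Your proposal is correct and follows the paper's proof essentially step by step: random inclusion with probability $1/K$, Chebyshev for the cardinality bound, a greedy choice of $M\ge N/(4K^2)$ centers with pairwise disjoint sets $x_i-G$ to obtain independent trials, and a union bound over at most $2KN^{2K}$ shift sets. The differences are only cosmetic: you allot each failure event less than $1/2$ instead of the paper's $1/4$, you make explicit the reduction to sets of distinct shifts, and you sketch the final numerical comparison, which the paper simply asserts.
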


\begin{proof}

Choose a random subset $A\subseteq\G_N$ by including
each element of \(\G_N\) independently with probability $1/K$.   The cardinality \(|A|\) follows a binomial distribution with parameters \(N\) and \(1/K\).
In particular, for any distinct \(y_1,\ldots,y_r\in \G_N \), independence gives 
\begin{align}\label{frac{1}{K^r}}
\mathbb P(y_1,\ldots,y_r\in A)=K^{-r}.
\end{align} 
Moreover, \begin{equation}\label{eq:cardinality} \mathbb E|A|=\frac{N}{K}, \qquad   \operatorname{Var}(|A|)
    =\frac NK\left(1-\frac1K\right).
\end{equation}

We first estimate the probability of the event \begin{align}\label{defC}
     C   :  =\left\{|A|>2 \mathbb E|A| \right\} 
    =\left\{|A|>\frac{2N}{K}\right\}
=\left\{|A|-\frac NK>\frac NK\right\}
 \subseteq
\left\{\left||A|-\frac NK\right|>\frac NK\right\}.
\end{align} 
Chebyshev's inequality gives
\begin{align}
    \mathbb P(C)
    \le
    \mathbb P\left(
        \left||A|-\frac NK\right|>\frac NK
    \right)
   \le
    \frac{\operatorname{Var}(|A|)}{(N/K)^2}
    =\frac{K-1}{N}
    <\frac1{K^2}
    \le\frac14,
    \label{eq:second-quarter}
\end{align}
where we used $N=K^{8K}\ge K^3$ and $K\ge2$.

We now turn to the intersection condition, beginning
with a fixed set of translation parameters. 
Fix $H\subseteq \G_N$  with $1\le |H| \le 2K$.
A point $x\in \G_N$ belongs to every translate $A+g$, $g\in H$,
if and only if $x-H\subseteq A$, where
$x-H:=\{x-g:g\in H\}$. 
Since the map $g\mapsto x-g$ is injective,  $x-H$ consists of
$|H| $ distinct elements. 
Thus the event
\begin{equation}
\label{eq:success-event}
 S_x(H):=\left\{x\in\bigcap_{g\in H}(A+g)\right\}
 =
 \big\{
 x-H\subseteq A
  \big\}
\end{equation}
has probability \begin{align}\label{eq:success-probability}
\mathbb P(S_x(H))
\stackrel{\eqref{frac{1}{K^r}}}{=} 
 K^{-|H|}
\overset{\scriptscriptstyle  1\le |H| \le 2K, K \geq 2}{\ge}
 K^{-2K} .
 \end{align}

For this fixed $H$, 
let
\begin{equation}\label{eq:fixed-bad-event}
 B_H:=\left\{\bigcap_{g\in H}(A+g)=\varnothing\right\}.
\end{equation}
To bound $\mathbb P(B_H)$, we select finitely many centers  
whose corresponding events $S_{x}(H)$ are mutually
independent. To ensure this independence, we choose the associated
sets $x-H$ to be pairwise disjoint.

For a fixed center $x \in\G_N$, a center $y\in\G_N$
satisfies $(x-H)\cap(y-H)\ne\varnothing$ if and only if there exist
$g,h\in H$ such that $x-g=y-h$, or equivalently $y=x+h-g$. 
Hence, the set of all \(y\in \G_N\) such that
$
(x-H)\cap(y-H)\neq\varnothing
$
is precisely \(x+(H-H)\), where
$ 
 H-H:=\{h-g:h,g\in H\}.
$ 
Thus, 
\begin{equation*}
 |x+(H-H)|=|H-H|\le |H|^2  \overset{\scriptscriptstyle  1\le |H| \le 2K}{\le}
 (2K)^2.
\end{equation*}

Start with all $N$ elements of $\G_N$ as candidate
centers. 
At each step, select a center $x$ and discard all
candidate centers $y$ satisfying
$(x-H)\cap(y-H)\ne\varnothing$, including $x$ itself.
Repeat until no candidate remains, and denote the
selected centers by $x_1,\ldots,x_M$.
By construction, the sets $x_i-H$ are pairwise disjoint.
Each step discards at most $(2K)^2$ centers, so
\begin{align}\label{eq:number-centers}
    N\le M(2K)^2,
    \qquad\text{hence}\qquad
    M\ge\frac{N}{(2K)^2}.
\end{align}
The centers are chosen independently of $A$.
Since the events $S_{x_i}(H)$ depend on the inclusion
of disjoint sets of points in $A$, they are mutually
independent.

If $B_H$ occurs, none of the selected centers is a common point.
Hence, 
$$B_H\stackrel{\eqref{eq:success-event}, \eqref{eq:fixed-bad-event}}{\subseteq}
\bigcap_{i=1}^{M}S_{x_i}(H)^C,$$ where the superscript $C$
denotes the complementary event. Independence  implies
\begin{align} \label{eq:fixed-failure-bound}
 \mathbb P(B_H)
 &\nonumber \,\,\,\,  \,\,\,\,  \le\mathbb P\left(\bigcap_{i=1}^{M}S_{x_i}(H)^C\right)
=\prod_{i=1}^{M}\mathbb P\bigl(S_{x_i}(H)^C\bigr)
 =\prod_{i=1}^{M}\bigl(1-\mathbb P(S_{x_i}(H))\bigr)\notag \stackrel{\eqref{eq:success-probability}}{=}
 \left(
 1- K^{-|H|}  \right)^M
\\
 &\,\, \,  \,\,\,\,  \stackrel{\eqref{eq:success-probability}}{\leq} 
(1-K^{-2K})^M \overset{\scriptscriptstyle  K \geq 2, \eqref{eq:number-centers} }{\leq}  (1-K^{-2K})^{N/(2K)^2}
\le \exp\left(-\frac{NK^{-2K}}{(2K)^2}\right)\notag
\\
&\,\,\,\, \,\,\,\, =\exp\left(-\frac{N}{4K^{2K+2}}\right), 
\end{align}
where we used the fact that  $1-u\le e^{-u}$ for $u\ge0$.

Let \begin{align}\label{defBcup}
    B:=\bigcup_{
\substack{ H\subseteq \G_N \\ 1\le|H|\le2K} } B_H.
\end{align}
For each \(1\le r\le2K\), the number of subsets \(H\subseteq \G_N\) with \(|H|=r\) is \(\binom{N}{r}\).
Since $\binom{N}{r}\le N^r\le N^{2K}$ for $1\le r\le2K$,
\begin{equation}\label{eq:number-families}
 \left| 
 \{H\subseteq \G_N:1\le|H|\le2K\} 
 \right| 
 =\sum_{r=1}^{2K}\binom{N}{r}
 \le2K N^{2K}.
\end{equation}
Then  
\begin{align}\label{eq:all-failure-bound}
     \mathbb P(B) & \nonumber \,\, \,\, \stackrel{\eqref{defBcup}}{\le}
     \sum_{\substack{ H\subseteq \G_N \\  1\le|H|\le2K} } \mathbb P(B_H)  \stackrel{\eqref{eq:fixed-failure-bound}}{\leq} 
\sum_{\substack{ H\subseteq \G_N \\ 1\le|H|\le2K} }  
\exp\left(-\frac{N}{4K^{2K+2}}\right) 
     \\ & \,\, \,\, \stackrel{\eqref{eq:number-families} }{\leq}
     2K N^{2K}\exp\left(-\frac{N}{4K^{2K+2}}\right)
    \overset{\scriptscriptstyle  N=K^{8K} }{<}  \frac14. 
     \end{align}

Hence, 
\begin{equation*}
 \mathbb P(B^c\cap C^c)
 =1-\mathbb P(B\cup C)
 \ge1-\mathbb P(B)-\mathbb P(C)
\stackrel{\eqref{eq:second-quarter}, \eqref{eq:all-failure-bound}}{>}
\frac12
>0.
\end{equation*} 
It follows that 
there exists a choice of \(A\) for which neither \(B\) nor \(C\) occurs.  By the definitions of $B$ and $C$, this set $A$ satisfies 
\eqref{eq:sparse}, i.e., 
$
|A|\stackrel{\eqref{defC}}{\le} 2 \mathbb E|A|  \stackrel{\eqref{eq:cardinality}}{=}
\frac{2N}{K}
$
and 
$
\bigcap_{g\in H}(A+g)\neq\varnothing
$
for every \(H\subseteq \G_N\) with \(1\le |H|\le 2K\) (see \eqref{defBcup}). 
The proof is complete. 
\end{proof}

Let $E,F$ be subsets of a set $\Omega$. Write
$E\triangle F:=(E\setminus F)\cup(F\setminus E)$
for their symmetric difference.
Note that 
\(
|\mathbf{1}_E(x)-\mathbf{1}_F(x)|=\mathbf{1}_{E \triangle F}(x). 
\)

Fix $K,N,A$ supplied by Lemma \ref{lem:sparse}. 
We next construct a translation-invariant submeasure
that assigns a large value to $A$ and uniformly small
values to the symmetric differences of its translates.

Set
\begin{align}\label{defDab}
D_{a,b}:=(A+a)\mathbin\triangle(A+b),
               \qquad a,b\in\G_N.
\end{align}
For $E\subseteq\G_N$, let 
\[
         \nu(E):=\inf
         \left\{       m\in\N\cup\{0\}:
         E\subseteq
                   \bigcup_{i=1}^mD_{a_i,b_i} \,\, 
\text{for some } a_i,b_i\in\G_N 
\right\}.
\] 
If no finite cover exists, set $\nu(E)=\infty$.  
Define
\begin{equation}\label{eq:phi}
                  \phi(E):=\min\left(1,\frac{\nu(E)}K\right).
\end{equation}
In particular, 
 $\phi(E)=1$ whenever $\nu(E)=\infty$.

\begin{lemma}\label{lem:submeasure}
The set function $\phi$ is a strictly positive, translation-invariant
submeasure on $\G_N$.  Moreover,
\begin{equation}\label{eq:phi-estimates}
                  \phi(A)=1,
          \qquad \phi(D_{a,b})\le K^{-1}\quad(a,b\in\G_N).
\end{equation}
\end{lemma}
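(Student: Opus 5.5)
We verify the submeasure axioms directly from the covering definition of $\nu$, and then derive \eqref{eq:phi-estimates}, using Lemma~\ref{lem:sparse} only for $\phi(A)=1$.

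\emph{Submeasure properties.} First, $\nu(\varnothing)=0$ (take $m=0$), so $\phi(\varnothing)=0$. If $E\subseteq F$, every cover of $F$ by sets $D_{a_i,b_i}$ also covers $E$, so $\nu(E)\le\nu(F)$ and hence $\phi(E)\le\phi(F)$. For subadditivity, concatenating a cover of $E$ with $m$ sets and a cover of $F$ with $m'$ sets gives a cover of $E\cup F$ with $m+m'$ sets. Thus $\nu(E\cup F)\le\nu(E)+\nu(F)$, and since $t\mapsto\min(1,t/K)$ is monotone and subadditive on $[0,\infty]$, we get $\phi(E\cup F)\le\phi(E)+\phi(F)$.

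\emph{Translation invariance.} This follows from $D_{a,b}+g=D_{a+g,b+g}$, because translation commutes with symmetric difference. Hence a cover of $E$ translates to a cover of $E+g$ of the same size, and the reverse direction uses $-g$.

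\emph{Strict positivity.} If $E\ne\varnothing$, then $\nu(E)\ge1$, because the empty cover does not cover $E$. Therefore $\phi(E)\ge\min(1,1/K)>0$.

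\emph{The estimate for $D_{a,b}$.} The set $D_{a,b}$ covers itself, so $\nu(D_{a,b})\le1$ and $\phi(D_{a,b})\le 1/K$.

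\emph{The identity $\phi(A)=1$.} This is the main point: we must show $\nu(A)\ge K$. Suppose instead that $A\subseteq\bigcup_{i=1}^m D_{a_i,b_i}$ with $m\le K-1$. The plan is to find a point $x\in A$ that lies in none of the $D_{a_i,b_i}$, giving a contradiction. A point $x$ avoids $D_{a_i,b_i}$ exactly when $x\in(A+a_i)\cap(A+b_i)$ or $x\notin(A+a_i)\cup(A+b_i)$. We choose to force the first alternative for every $i$. Consider the translates
\[
A+0,\; A+a_1,\; A+b_1,\;\ldots,\;A+a_m,\;A+b_m .
\]
These are $s\le 2m+1\le 2K-1\le 2K$ sets; repeated parameters are harmless, since the lemma allows arbitrary $g_j$. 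By Lemma~\ref{lem:sparse}, their intersection contains some point $x$. Then $x\in A$, and for each $i$ we have $x\in A+a_i$ and $x\in A+b_i$, so $x\notin D_{a_i,b_i}$. This contradicts the assumed cover. Hence $\nu(A)\ge K$, which gives $\phi(A)=\min(1,\nu(A)/K)=1$; this includes the case $\nu(A)=\infty$.

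The only delicate step is this counting: the number of translates must stay at most $2K$. This is why the bound $s\le 2K$ appears in Lemma~\ref{lem:sparse}.
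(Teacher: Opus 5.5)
Your proposal is correct and follows the paper's proof essentially step by step. Both arguments combine covers for subadditivity together with the inequality $\min\{1,p+q\}\le\min\{1,p\}+\min\{1,q\}$, and both use $D_{a,b}+c=D_{a+c,b+c}$ for translation invariance. Both also obtain $\nu(A)\ge K$ by applying Lemma~\ref{lem:sparse} to the $2m+1\le 2K-1$ translates with parameters $0,a_1,b_1,\ldots,a_m,b_m$.
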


\begin{proof}
Clearly $\phi(\varnothing)=\min\left(1,\frac{0}K\right)=0$. 
If $E\subseteq F\subseteq \G_N$, every cover of $F$ also covers $E$,
so $\nu(E)\le\nu(F)$ and hence $\phi(E)\le\phi(F)$. 
If $E\ne\varnothing$, then $\nu(E)\ge1$, with
$\nu(E)=\infty$ allowed. Then
\begin{equation}\label{phi(E)gefrac1K}
    \frac1K
    \overset{\scriptscriptstyle  \nu(E)\ge1, K \geq 2}{\le}
\phi(E)
    \stackrel{\eqref{eq:phi}}{\le}
    1
    \qquad(\varnothing\ne E\subseteq\G_N), 
\end{equation} 
which implies that  $\phi$ is strictly positive. 
Combining covers gives
$\nu(E\cup F)\le\nu(E)+\nu(F)$,
with the inequality automatic if either term on
the right is infinite.  
Since
\[
    \min\{1,p+q\}
    \le\min\{1,p\}+\min\{1,q\},
    \qquad p,q\in[0,\infty],
\]
definition~\eqref{eq:phi} yields
$\phi(E\cup F)\le\phi(E)+\phi(F)$.
Together with monotonicity and $\phi(\varnothing)=0$,
this shows that $\phi$ is a submeasure.

Observe that, $$ D_{a,b}+c=
\bigl((A+a)\triangle(A+b)\bigr)+c=
(A+a+c)\triangle(A+b+c)=
D_{a+c,b+c},$$ 
where $c\in \G_N$. 
If
$
E\subseteq  \bigcup_{i=1}^m D_{a_i,b_i},
$ 
then
$ 
E+c\subseteq  \bigcup_{i=1}^m D_{a_i+c,b_i+c}.
$ 
Thus,  
\(\nu(E+c)\le \nu(E)\). 
Applying the same argument with \(-c\) gives \(\nu(E)\le \nu(E+c)\), and therefore,
$
\nu(E+c)=\nu(E).
$
Consequently,
$ 
\phi(E+c)=\phi(E),
$
so \(\phi\) is translation invariant.

Since each $D_{a,b}$ covers itself, we have
$\nu(D_{a,b})\le1$, and so, 
$$\phi(D_{a,b})\stackrel{\eqref{eq:phi}}{\leq }
\min\{ 1,K^{-1}
\}\overset{\scriptscriptstyle  K \geq 2 }
{=} K^{-1}, \qquad a,b\in\G_N,$$ proving the second estimate
in \eqref{eq:phi-estimates}.  

Finally, we prove that $\phi(A)=1$. To this end, we show that any finite cover of $A$
by sets of the form $D_{a,b}$ must contain at least
$K$ sets.
Assume, towards a contradiction, that 
\begin{align}\label{0lemK}
A\subseteq  \bigcup_{i=1}^m D_{p_i,q_i}, \quad\text{for some }0\le m<K, \,\,  p_i,q_i\in\G_N.
\end{align}
The list of translation parameters
$0,p_1,q_1,\ldots,p_m,q_m$ has
$$ 1+2m\stackrel{\eqref{0lemK}}{\le} 1+2(K-1)= 2K-1< 2K
$$ entries.
By Lemma~\ref{lem:sparse}, there is a point $x\in A$
that belongs to both $A+p_i$ and $A+q_i$ for every $1\leq i \leq m$, 
and hence, 
$ 
x\notin D_{p_i,q_i}
\stackrel{\eqref{defDab}}{=}
(A+p_i)\mathbin\triangle(A+q_i)
$  for every $1\leq i \leq m$.  Therefore, 
\[
x\in A
\quad\text{but}\quad
x\notin \bigcup_{i=1}^m D_{p_i,q_i},
\]
which contradicts the covering assumption  \eqref{0lemK}. This implies that  $\nu(A)\ge K$, and so, $\phi(A)\stackrel{\eqref{eq:phi}}{=}1$.
\end{proof}


We now use the submeasure $\phi$ defined by  \eqref{eq:phi} to construct  a lattice quasi-norm
on a sequence space.

Let  
\begin{align*}
c_0(\G_N\times\Z)
 :=\left\{v=(v(r,\ell))_{(r,\ell)\in\G_N\times\mathbb Z}:
 \max_{r\in\G_N}|v(r,\ell)|\to 0
 \text{ as }|\ell|\to\infty\right\},
\end{align*}
equipped with the supremum norm 
\begin{align}\label{defvinftyoriginal}
\|v\|_\infty
:=\sup_{\substack{r\in\G_N\\ \ell\in\Z}}
|v(r,\ell)|, 
\end{align}
under which it is a Banach space.

For $v\in c_0(\G_N\times\Z)$, define
\begin{align} \label{defqv}       s_v(r):=\sup_{\ell\in\Z}|v(r,\ell)|,
 \qquad
             q(v):=\int_0^\infty
             \phi\bigl(\{r\in\G_N:        s_v(r)>t\}\bigr)\dd t.     
\end{align}
In particular, 
\begin{align*}
\|v\|_\infty
\stackrel{\eqref{defvinftyoriginal}, \eqref{defqv}}{=}
\mathop{\max}\limits_{r\in\G_N}s_v(r).  
\end{align*}
\begin{lemma}\label{lem:q}
The functional $q$ is a
lattice quasi-norm on $c_0(\G_N\times\Z)$
such that
\begin{equation}\label{eq:q-bounds}
 q(u+v)\le2\bigl(q(u)+q(v)\bigr),
 \qquad
 \frac1K\|v\|_\infty\le q(v)\le\|v\|_\infty.
\end{equation}
Consequently,  $(c_0(\G_N\times\Z),q)$ is complete and order continuous.
\end{lemma}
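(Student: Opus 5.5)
The functional $q$ is a Choquet integral of the function $s_v\colon\G_N\to[0,\infty)$ with respect to the submeasure $\phi$ of Lemma~\ref{lem:submeasure}. I will verify the properties in the following order: norm bounds, positivity, homogeneity, the lattice property, the quasi-triangle inequality, and finally completeness and order continuity.

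\emph{Norm bounds, positivity, homogeneity, lattice property.} First, for $v\in c_0(\G_N\times\Z)$ each $s_v(r)$ is finite and $\max_r s_v(r)=\|v\|_\infty$. The level sets $\{s_v>t\}$ are empty for $t\ge\|v\|_\infty$ and nonempty for $t<\|v\|_\infty$. Combining $\phi\le 1$ with \eqref{phi(E)gefrac1K} (that is, $\phi(E)\ge 1/K$ for $E\neq\varnothing$), the integrand lies between $1/K$ and $1$ on $[0,\|v\|_\infty)$ and vanishes afterwards. This gives $\frac1K\|v\|_\infty\le q(v)\le\|v\|_\infty$, and in particular $q(v)=0$ iff $v=0$. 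Homogeneity follows from $s_{\lambda v}=|\lambda|s_v$ and the substitution $t\mapsto|\lambda|t$. If $|u|\le|v|$ pointwise, then $s_u\le s_v$, so $\{s_u>t\}\subseteq\{s_v>t\}$, and monotonicity of $\phi$ gives $q(u)\le q(v)$. The same argument shows that $c_0(\G_N\times\Z)$ is an ideal.

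\emph{Quasi-triangle inequality.} Since $s_{u+v}\le s_u+s_v$, we have
\[
\{s_{u+v}>t\}\subseteq\{s_u>t/2\}\cup\{s_v>t/2\}.
\]
Subadditivity of $\phi$ then gives
\[
q(u+v)\le\int_0^\infty\phi(\{s_u>t/2\})\dd t+\int_0^\infty\phi(\{s_v>t/2\})\dd t=2q(u)+2q(v).
\]
This is the one step that uses the submeasure structure, and it is where the constant $2$ comes from. I do not attempt the sharper Choquet subadditivity, which would require $\phi$ to be submodular and is not available here.

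\emph{Completeness and order continuity.} Completeness follows because $q$ is equivalent to $\|\cdot\|_\infty$ by \eqref{eq:q-bounds}, and $c_0(\G_N\times\Z)$ is complete under the sup norm: a $q$-Cauchy sequence is $\|\cdot\|_\infty$-Cauchy, its limit lies in $c_0$, and it is also the $q$-limit.

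The main point to check is order continuity. Let $0\le v_n\downarrow0$ pointwise in $c_0(\G_N\times\Z)$, and fix $\varepsilon>0$. Since $v_1\in c_0$, choose $L$ with $\max_r v_1(r,\ell)<\varepsilon$ for $|\ell|>L$; then $v_n(r,\ell)<\varepsilon$ there for all $n$. On the finite set $\G_N\times[-L,L]$, pointwise decrease to $0$ is uniform, so $\|v_n\|_\infty<\varepsilon$ for large $n$. Thus $\|v_n\|_\infty\to0$, and by \eqref{eq:q-bounds} also $q(v_n)\to0$. The crux is this Dini-type argument, which uses the $c_0$ tail control together with the finiteness of $\G_N$.
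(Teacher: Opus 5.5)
Your proof is correct and follows the paper's argument essentially step for step. The bounds come from $1/K\le\phi(E)\le 1$ on nonempty level sets, and the quasi-triangle inequality from $\{s_{u+v}>t\}\subseteq\{s_u>t/2\}\cup\{s_v>t/2\}$ with monotonicity and subadditivity of $\phi$; completeness and order continuity are transferred from the sup norm via \eqref{eq:q-bounds}. The only addition is your explicit Dini-type argument (tail control from $v_1\in c_0$ plus uniform convergence on the finite set $\G_N\times[-L,L]$), which the paper leaves implicit as a known property of $c_0(\G_N\times\Z)$.
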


\begin{proof}
Since  
\begin{align}\label{setempty}
\{r\in\G_N:
 s_v(r)>t\}=\varnothing \quad \text{  for  every } t\ge \|v\|_\infty ,  
 \end{align}
 it follows that
\begin{align*}  
\frac1K\|v\|_\infty
& \nonumber \quad \,\,\,\,\,   = 
\int_0^{\|v\|_\infty}
\frac1K\,dt  
\stackrel{\eqref{phi(E)gefrac1K}}{\le}
   \int_0^{\|v\|_\infty}
     \phi\bigl(\{r:s_v(r)>t\}\bigr)\dd t 
\\ & \nonumber \overset{\phi(\varnothing)=0, \eqref{setempty}}{ =} 
\int_0^\infty
          \phi\bigl(\{r:s_v(r)>t\}\bigr)\dd t 
          \stackrel{\eqref{defqv}}{=}
          q(v)
        \\ & 
 \,\,\, \quad   \stackrel{\eqref{eq:phi}}{\le} \int_0^{\|v\|_\infty}
1\,dt = \|v\|_\infty.   
\end{align*} 
In particular, $q$ is finite and $ 
q(v)=0 $ iff $v=0. $
Observe that, 
for $u,v\in c_0(\G_N\times\Z)$, \[ \{r:s_{u+v}(r)>t\} \subseteq \{r:s_u(r)>t/2\}\cup\{r:s_v(r)>t/2\}. \] 
The monotonicity, subadditivity of $\phi$ (see Lemma \ref{lem:submeasure}) and \eqref{defqv} give  
$  q(u+v) 
\leq  2q(u)+2q(v). $
 Homogeneity and lattice monotonicity follow directly from the level-set
formula.
Thus,  $q$ is 
a lattice  quasi-norm on $c_0(\G_N\times\Z)$. 
 Since $q$ is equivalent to the supremum norm $\norm{\cdot}_{\infty}$  (see \eqref{eq:q-bounds}),   completeness and order continuity 
follow from the corresponding properties of $c_0(\G_N\times\Z)$. 
\end{proof}

Since  $q$ satisfies the quasi-triangle inequality
with constant $2$ (see Lemma ~\ref{lem:q}), we choose
$$p:=1/2,$$ so that
$
(2\cdot2)^p=2.
$
For $u, v\in c_0(\G_N\times\Z)$, define
\begin{equation}\label{eq:frink}
    \|v\|
    :=
    \left(
        \inf_{v=\sum_{j=1}^{J}v_j}
        \sum_{j=1}^{J}q(v_j)^p
    \right)^{1/p},
\end{equation}
where the infimum is taken over all finite
decompositions with integer $J\ge1$ and
$v_j\in c_0(\G_N\times\Z)$.  
The standard Frink--Aoki estimate \cite{Frink1937,Rolewicz1972}  yields
\begin{equation}\label{eq:b-estimate}
    \begin{aligned}
\|u+v\|^p\le\|u\|^p+\|v\|^p,\qquad \frac1{16}q(v)\le\|v\|\le q(v).
    \end{aligned}
\end{equation}
The proof of \cite[Proposition~1.3]{Brudnyi2015}
shows that $\norm{\cdot}$ is lattice monotone.
Its equivalence to $q$ then yields completeness
and order continuity.

\begin{lemma}\label{lem:renorming} 
Fix $j\in\G_N$ and choose an integer $c_r$ for each
$r\in\G_N$.  
Define  a linear bijection  
$T: c_0(\G_N\times\Z) \to c_0(\G_N\times\Z)$ by 
\begin{equation}\label{eq:fiber-permutation}
    (Tv)(r,\ell)
    :=v(r+j\pmod N,\ell+c_r),
    \qquad (r,\ell)\in\G_N\times\Z.
\end{equation}
Then $T$ satisfies
\begin{equation}\label{eq:fiber-invariance}
    q(Tv)=q(v),
    \qquad
    \norm{Tv}=\norm{v},
    \qquad v\in c_0(\G_N\times\Z).
\end{equation}
\end{lemma}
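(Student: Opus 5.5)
The plan is to reduce the statement to the level-set formula \eqref{defqv} together with the translation invariance of $\phi$ from Lemma~\ref{lem:submeasure}, and then to push the identity $q(Tv)=q(v)$ through the Frink--Aoki construction \eqref{eq:frink}, using that $T$ is linear and bijective.

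\textbf{Step 1: $T$ is a well-defined linear bijection of $c_0(\G_N\times\Z)$.} Linearity is clear from \eqref{eq:fiber-permutation}. The map $T$ preserves the decay condition: for each $r$ we have $|(Tv)(r,\ell)|=|v(r+j,\ell+c_r)|$, and $\max_r|v(r',\ell+c_r)|\to0$ as $|\ell|\to\infty$ because there are only finitely many shifts $c_r$. The inverse is
\[
(T^{-1}w)(r',\ell')=w\bigl(r'-j,\ \ell'-c_{r'-j}\bigr),
\]
which is of the same type, so it also preserves $c_0(\G_N\times\Z)$.

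\textbf{Step 2: $q(Tv)=q(v)$.} For fixed $r$, the map $\ell\mapsto\ell+c_r$ is a bijection of $\Z$. Hence
\[
s_{Tv}(r)=\sup_\ell|v(r+j,\ell+c_r)|=s_v(r+j).
\]
Therefore, for every $t>0$,
\[
\{r:s_{Tv}(r)>t\}=\{r:s_v(r+j)>t\}=\{r':s_v(r')>t\}-j.
\]
By the translation invariance of $\phi$ (Lemma~\ref{lem:submeasure}), the integrands in \eqref{defqv} agree for every $t$, so $q(Tv)=q(v)$.

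\textbf{Step 3: $\norm{Tv}=\norm{v}$.} Let $v=\sum_{j'}v_{j'}$ be any finite decomposition. Then $Tv=\sum_{j'} Tv_{j'}$ by linearity, and by Step 2,
\[
\sum_{j'} q(Tv_{j'})^p=\sum_{j'} q(v_{j'})^p.
\]
Taking the infimum in \eqref{eq:frink} gives $\norm{Tv}\le\norm{v}$. Conversely, any decomposition $Tv=\sum w_{j'}$ pulls back to the decomposition $v=\sum T^{-1}w_{j'}$. Here $T^{-1}$ is an operator of the same form, with $j$ replaced by $-j$ and suitable integer shifts, so Step 2 applies to it as well. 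This gives the reverse inequality.

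\textbf{Main obstacle.} The only delicate point is bookkeeping. The fiber shifts $c_r$ depend on $r$, so one must check that $s_{Tv}$ really is just a translate of $s_v$ (which holds because the supremum over $\ell$ is taken fiberwise) and that $T^{-1}$ is again of the form covered by the argument. Once this is verified, everything follows formally.
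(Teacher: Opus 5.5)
Your proposal is correct and follows essentially the same route as the paper. The fiberwise identity $s_{Tv}(r)=s_v(r+j)$ together with the translation invariance of $\phi$ gives $q(Tv)=q(v)$; this transfers to the Frink--Aoki quasi-norm by pushing decompositions forward under $T$ and pulling them back under $T^{-1}$, which is an operator of the same form (the only cosmetic difference being that you check $Tv\in c_0(\G_N\times\Z)$ via the finiteness of the shifts $c_r$, whereas the paper uses that only finitely many coordinates exceed any $\varepsilon>0$).
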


\begin{proof}

For every $\varepsilon>0$, a sequence in
$c_0(\G_N\times\Z)$ has only finitely many coordinates
whose absolute values are at least $\varepsilon$. 
For every $\varepsilon>0$,
\[
    | \{(r,\ell):|(Tv)(r,\ell)|\ge\varepsilon\}
    |
    =
    | \{(r,\ell):|v(r,\ell)|\ge\varepsilon\} 
    |
    <\infty,
\]
and so, $Tv\in c_0(\G_N\times\Z)$.
The inverse of $T$ is given by
\[
    (T^{-1}v)(r,\ell)
    =v(r-j,\ell-c_{r-j}),
    \qquad (r,\ell)\in\G_N\times\Z,
\]
where the first coordinate is taken modulo $N$. 
Thus, $T$ is a linear bijection of
$c_0(\G_N\times\Z)$ onto itself.

For every $r\in\G_N$,
\begin{align}\label{sTvsv}
    s_{Tv}(r)  \stackrel{\eqref{defqv}, \eqref{eq:fiber-permutation}}{=}   \sup_{\ell\in\Z}|v(r+j,\ell+c_r)|
    =s_v(r+j),
\end{align}
since $\ell\mapsto\ell+c_r$ is a bijection of $\Z$. 
Since 
\[
    \{r\in\G_N:s_{Tv}(r)>t\}
   \stackrel{\eqref{sTvsv}}{ =}
    \{r\in\G_N:s_v(r)>t\}-j, 
\]
it follows from 
 the translation invariance of $\phi$  (see Lemma \ref{lem:submeasure}) that 
\begin{align}\label{qTv=qv}
    q(Tv)   & \nonumber \stackrel{\eqref{defqv}}{=}
    \int_0^\infty
      \phi\bigl(\{r\in\G_N:s_v(r)>t\}-j\bigr)\,dt
     \\ &  \,\, = \int_0^\infty
      \phi\bigl(\{r\in\G_N:s_v(r)>t\}\bigr)\,dt
   \stackrel{\eqref{defqv}}{=}q(v).
\end{align}

For any finite decomposition $v=\sum_{i=1}^{J}v_i$,
linearity gives $Tv=\sum_{i=1}^{J}Tv_i$.
Hence, 
\[
    \norm{Tv}^{p}
\stackrel{\eqref{eq:frink}}{\le}
\sum_{i=1}^{J}q(Tv_i)^p
    \stackrel{\eqref{qTv=qv}}{=}
    \sum_{i=1}^{J}q(v_i)^p.
\]
Taking the infimum over all finite decompositions
of $v$ yields
$
    \norm{Tv}^{p}\le\norm{v}^{p}.
$
Applying the same argument to $T^{-1}$ gives
$ 
    \norm{v}^{p}
    =\norm{T^{-1}(Tv)}^{p}
    \le\norm{Tv}^{p}.
$ 
Consequently, $\norm{Tv}=\norm{v}$, proving
\eqref{eq:fiber-invariance}.

\end{proof}

Fix $\delta>0$ and  identify the elements of $\G_N$ with their
representatives in $\{0,\ldots,N-1\}$. Every $x\in\R$ has a unique representation
\[
            x=t+\delta(r+N\ell),
 \qquad 0\le t<\delta,\quad r\in\G_N,\quad \ell\in\Z.
\]
For a measurable function $f: \R\to\R$,  and each $t\in[0,\delta)$,
define the sequence
$V_f(t)=(V_f(t)(r,\ell))_{(r,\ell)\in\G_N\times\Z}$
by
\begin{equation}\label{eq:fiberization}
    V_f(t)(r,\ell)
    :=f\bigl(t+\delta(r+N\ell)\bigr).
\end{equation}

The preceding construction defines a function lattice
$(Y,\norm{\cdot}_Y)$ depending on $K$, $N$, $A$ supplied by Lemma \ref{lem:sparse} and $\delta>0$.

\begin{proposition}\label{prop:Y} 

Let
\[
Y:=
\left\{
f\in L^0(\R):
V_f(t)\in c_0(\G_N\times\Z)
\text{ for a.e. }t\in[0,\delta),
\quad \|f\|_Y<\infty
\right\},
\]
where
\begin{equation}\label{eq:Y-norm}
    \|f\|_Y
    :=
    \left(
       \frac1\delta\int_0^\delta
        \|V_f(t)\|^p\,dt
    \right)^{1/p}.
\end{equation}
The space $(Y, \norm{\cdot}_Y)$ is an order-continuous $p$-Banach function lattice, where $p=1/2$.   Every bounded, compactly supported 
function $f$ belongs to $Y$ and satisfies \begin{align}\label{eq:Y-sup-bound}
\norm{f}_{Y}\le\norm{f}_\infty.
\end{align}
Moreover, every real
translation is a surjective isometry of $Y$. In particular, 
$ (Y,\norm{\cdot}_Y^p)
$ belongs to
Brudnyi's class $\cL$. 
\end{proposition}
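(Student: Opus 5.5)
The plan is to treat $Y$ as a Bochner-type space $L^p([0,\delta),\tfrac{dt}{\delta};Z)$, where $Z:=(c_0(\G_N\times\Z),\norm{\cdot})$, realised through the fiberization $f\mapsto V_f$. The map $f\mapsto V_f$ is a lattice isomorphism: it is linear, preserves pointwise order and absolute values, and is bijective between $L^0(\R)$ and measurable families $t\mapsto V_f(t)$.

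\emph{Step 1: $\norm{\cdot}_Y$ is a $p$-norm with the lattice property.} Homogeneity is immediate. The $p$-triangle inequality follows by integrating $\|V_{f+g}(t)\|^p\le\|V_f(t)\|^p+\|V_g(t)\|^p$ from \eqref{eq:b-estimate}. If $|f|\le|g|$ a.e., then $|V_f(t)|\le|V_g(t)|$ coordinatewise for a.e.\ $t$. Since $Z$ is solid in $\ell^\infty$ and $\norm{\cdot}$ is lattice monotone, $V_f(t)\in c_0$ and $\|V_f(t)\|\le\|V_g(t)\|$, so $\|f\|_Y\le\|g\|_Y$. Definiteness follows from $\|v\|\ge\frac1{16K}\|v\|_\infty$.

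Measurability of $t\mapsto\|V_f(t)\|$ needs a remark. Since $\|\cdot\|$ is equivalent to $q$, it suffices to use $q$; but to be careful, note that $\|\cdot\|$ is continuous with respect to $\norm{\cdot}_\infty$ on $c_0$, and $V_f(t)$ is a pointwise limit of measurable countable-coordinate data. Truncating to finitely many coordinates gives continuous functions of finitely many measurable values, and their monotone limit is measurable by order continuity.

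\emph{Step 2: completeness.} This is the step I expect to be the main obstacle. I would use the Riesz--Fischer criterion for $p$-normed spaces: if $\sum\|f_k\|_Y^p<\infty$, then $\sum f_k$ converges. Put $F(t):=\sum_k\|V_{f_k}(t)\|^p$, which lies in $L^1$. For a.e.\ $t$ we have $F(t)<\infty$, so $\sum V_{f_k}(t)$ converges in the complete space $Z$ to some $W(t)$. In particular it converges coordinatewise, so $\sum f_k$ converges pointwise a.e.\ to a measurable $f$ with $V_f(t)=W(t)$. Then
\[
\|f-\textstyle\sum_{k\le n}f_k\|_Y^p\le\frac1\delta\int_0^\delta\sum_{k>n}\|V_{f_k}(t)\|^p\,dt\to0,
\]
using the $p$-triangle inequality in $Z$ together with a limit argument in $Z$.

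\emph{Step 3: order continuity.} Let $0\le f_n\downarrow0$ a.e. For a.e.\ $t$, $V_{f_n}(t)\downarrow0$ coordinatewise inside $Z$, so $\|V_{f_n}(t)\|\to0$ by order continuity of $Z$, which Lemma~\ref{lem:q} gives via the equivalence to $q$. The bound $\|V_{f_n}(t)\|^p\le\|V_{f_1}(t)\|^p$ is integrable, so dominated convergence gives $\|f_n\|_Y\to0$. For bounded $f$ with compact support, $V_f(t)$ has finitely many nonzero entries, hence lies in $c_0$. Moreover $\|V_f(t)\|\le q(V_f(t))\le\|V_f(t)\|_\infty\le\|f\|_\infty$, which gives \eqref{eq:Y-sup-bound}.

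\emph{Step 4: translation invariance.} Write $h=\delta m+\theta$ with $m\in\Z$ and $0\le\theta<\delta$. For $t<\delta-\theta$ we get $V_{\tau_hf}(t)=V_f(t+\theta)$ shifted in the index $r+N\ell$ by $m$. For $t\ge\delta-\theta$, $V_{\tau_hf}(t)=V_f(t+\theta-\delta)$ shifted by $m+1$.

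An integer shift $k\mapsto k+m$ on $\Z$, written as $r+N\ell\mapsto r'+N\ell'$, is exactly an operator of the form \eqref{eq:fiber-permutation}. Take $j=m\bmod N$; then the carry gives $c_r\in\{\lfloor m/N\rfloor,\lfloor m/N\rfloor+1\}$, depending on whether $r+j$ wraps around. So Lemma~\ref{lem:renorming} gives $\|V_{\tau_hf}(t)\|=\|V_f(t')\|$, where $t'$ is $t+\theta$ or $t+\theta-\delta$. The change of variables $t\mapsto t'$ is a measure-preserving bijection of $[0,\delta)$, so $\|\tau_hf\|_Y=\|f\|_Y$. Membership in $c_0$ is also preserved, and $\tau_{-h}$ gives surjectivity.

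Finally, $\norm{\cdot}_Y^p$ is an $F$-norm. Together with translation invariance and absolute continuity, which follows from order continuity as noted in the Preliminaries, this places $Y$ in $\cL$.
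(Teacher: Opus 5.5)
Your proposal is correct and follows essentially the paper's route: the fiberization $f\mapsto V_f$ realising $Y$ as $L^p([0,\delta),\frac{dt}{\delta};Z)$, coordinatewise domination for the lattice property, dominated convergence for order continuity, and Lemma~\ref{lem:renorming} with the measure-preserving map $t\mapsto t'$ for translation invariance. The only differences are that you prove completeness by hand via the Riesz--Fischer criterion for $p$-normed spaces instead of citing completeness of the vector-valued $L^p$ space, and that you check measurability of $t\mapsto\|V_f(t)\|$ explicitly, which the paper leaves implicit (it is your truncation argument that gives this; equivalence with $q$ alone would not).
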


\begin{proof}
Under the natural identification $f\mapsto V_f$,
$Y$ is isometrically isomorphic to 
\[
                 L^p\left([0,\delta), \big( 
       c_0(\G_N\times\Z), \norm{\cdot}            \big);\frac{\dd  t}{\delta}
                 \right).
\]
This space is complete because
$(c_0(\G_N\times\Z),\norm{\cdot})$ is a
$p$-Banach space; see \cite[page 6]{Pisier2016}.
Hence $Y$ is complete. 
The $p$-triangle inequality for $\norm{\cdot}_Y$
follows directly from ~\eqref{eq:b-estimate}.

Let $g\in Y$ and let $f$ be measurable with
$|f|\le|g|$ almost everywhere.
Since $\G_N\times\Z$ is countable and for each fixed $(r,\ell)\in\G_N\times\Z$ and  almost every $t$, 
\[
 |V_f(t)(r,\ell)|
    \stackrel{\eqref{eq:fiberization} }{=}
    |
    f\bigl(t+\delta(r+N\ell)\bigr)
    |
    \leq 
    |
    g\bigl(t+\delta(r+N\ell)\bigr)
    |
      \stackrel{\eqref{eq:fiberization} }{=}
      |V_g(t)(r,\ell)|,  
\] 
it follows that 
$ 
      |V_f(t)|\le|V_g(t)|
  $   for almost every $t.$ 
As $V_g(t)\in c_0(\G_N\times\Z)$ almost everywhere,
this domination implies that
$V_f(t)\in c_0(\G_N\times\Z)$ almost everywhere.
By lattice monotonicity,
$\|V_f(t)\|\le\|V_g(t)\|$.
Therefore, \eqref{eq:Y-norm} gives $f\in Y$ and
$\|f\|_Y\le\|g\|_Y$.

Now let $f$ be bounded by $\norm{f}_{\infty}$ and compactly supported.
For almost every $t\in[0,\delta)$, $V_f(t)$ is finitely supported, and 
$\|V_f(t)\|
\stackrel{\eqref{eq:q-bounds}, \eqref{eq:b-estimate}}{\le}
\|V_f(t)\|_\infty
\stackrel{\eqref{eq:fiberization} }{\le}
\|f\|_\infty$.
Hence $f\in Y$ and $\norm{f}_Y\le\norm{f}_\infty$
by \eqref{eq:Y-norm}.

We next prove order continuity.
Let $(f_j)\subseteq Y$ satisfy
$0\le f_j\downarrow0$ almost everywhere.
For each fixed $(r,\ell)\in\G_N\times\Z$, we have
\[
0\le  V_{f_j}(t)(r,\ell)
\stackrel{\eqref{eq:fiberization} }{=} f_j\bigl(t+\delta(r+N\ell)\bigr)\downarrow0
\]
for almost every $t\in[0,\delta)$.
Since the coordinate set $\G_N\times\Z$ is countable, $V_{f_j}(t)\downarrow0$ coordinatewise for
almost every $t$.
Order continuity of $\norm{\cdot}$ therefore yields
$\|V_{f_j}(t)\|\to0$ for almost every $t$. 
Since 
$ 
    \|V_{f_j}(t)\|^p
    \le\|V_{f_1}(t)\|^p  
$
and 
$f_1\in Y$, 
applying the dominated convergence theorem
in \eqref{eq:Y-norm} gives  $\|f_j\|_Y\to0$.

It remains to prove translation invariance.
Fix $h\in\R$ and write
\begin{align}\label{defhdecom}
    h=a\delta+s,
    \qquad a\in\Z,\quad 0\le s<\delta.
\end{align}
Fix $t\in[0,\delta)$. Then,  
\begin{align*}
 V_{\tau_hf}(t)(r,\ell)
 &\stackrel{\eqref{eq:fiberization} }{=}
 \tau_hf(t+\delta(r+N\ell))
 =
 f\bigl(t+\delta(r+N\ell)+h\bigr)\\
 &
 \stackrel{\eqref{defhdecom}}{=}
  f\big(
 t+s-\one_{[\delta-s,\delta)}(t)\delta
 +\delta(r+N\ell+a+\one_{[\delta-s,\delta)}(t)
 )
 \big).
\end{align*}
 For each $r\in\{0,\ldots, N-1\}$,
write  the following unique decomposition 
\[
 r+a+\one_{[\delta-s,\delta)}(t)=r'+Nd_r,
 \]
where
$r'=r+a+\one_{[\delta-s,\delta)}(t) \pmod N 
\in\{0,\ldots,N-1\},$
$d_r\in\mathbb Z.$ 
We obtain
\begin{align*}
 V_{\tau_hf}(t)(r,\ell)
& \, \, =f\bigl(
t+s-\one_{[\delta-s,\delta)}(t)\delta
+\delta(r'+N(\ell+d_r))\bigr)
\\ & \stackrel{\eqref{eq:fiberization} }{=}
V_f(t+s-\one_{[\delta-s,\delta)}(t)\delta)
(r',\ell+d_r)
.
\end{align*}
This is precisely a coordinate permutation of the
form \eqref{eq:fiber-permutation}.
Consequently, for each fixed $t$, there is a
corresponding operator $T$ such that $  V_{\tau_hf}(t)(r,\ell) \stackrel{ \eqref{eq:fiber-permutation}}{=} 
[T V_f(t+s-\one_{[\delta-s,\delta)}(t)\delta) ] (r,\ell)$, and so, 
\[
V_{\tau_h f}(t)=T V_f(t+s-\one_{[\delta-s,\delta)}(t)\delta).
\]
Since $f\in Y$ and the map $t\mapsto t+s-\one_{[\delta-s,\delta)}(t)\delta$ preserves Lebesgue measure,
$V_f(t+s-\one_{[\delta-s,\delta)}(t)\delta)$ belongs to $c_0(\G_N\times\Z)$ for almost
every $t$.
The same holds for $V_{\tau_h f}(t)$ by Lemma ~\ref{lem:renorming}. 
Hence,
\[
\begin{aligned}
\frac1\delta\int_0^\delta
    \|V_{\tau_h f}(t)\|^p\,\dd t
&\stackrel{\eqref{eq:fiber-invariance}}{=}\frac1\delta\int_0^\delta
    \|V_f(
    t+s-\one_{[\delta-s,\delta)}(t)\delta
    )\|^p\,\dd t\\ 
&\,\,=\frac1\delta 
\left( 
\int_0^{\delta-s}\|V_f(t+s)\|^p\,\dd t
  +\int_{\delta-s}^{\delta}\|V_f(t+s-\delta)\|^p\,\dd t
  \right)\\
&\,\,=\frac1\delta \left( 
\int_s^\delta\|V_f(u)\|^p\,\dd u
  +\int_0^s\|V_f(u)\|^p\,\dd u
  \right)\\
&\,\,=\frac1\delta \int_0^\delta\|V_f(u)\|^p\,\dd u
\stackrel{\eqref{eq:Y-norm}}{=}
\|f\|_Y^p.
\end{aligned}
\]
This implies that  $\tau_h f\in Y$ and
$\|\tau_h f\|_Y=\|f\|_Y$.
Since $\tau_{-h}$ is the inverse of $\tau_h$,
every translation is a surjective isometry of $Y$.
\end{proof}

\begin{lemma}\cite[Proposition~1.2(a)]{Brudnyi2015}\label{lem:Y-local-measure}
Convergence in $Y$ implies convergence
in measure on every bounded interval. 
\end{lemma}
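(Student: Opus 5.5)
The claim is Brudnyi's Proposition~1.2(a) applied to $Y$. Since it is cited, the natural proof adapts the general argument: an order-continuous (or merely lattice) quasi-normed function space on $\R$ that contains indicators of bounded sets with positive norm embeds continuously into $L^0$ on bounded sets. I would instead give a direct argument tailored to $Y$, using the explicit structure of the norm \eqref{eq:Y-norm}.

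Let $I$ be a bounded interval and suppose $\|f_j-f\|_Y\to0$. Put $g_j:=f_j-f$, fix $\varepsilon>0$, and set $S_j:=\{x\in I:|g_j(x)|>\varepsilon\}$. The goal is to show $|S_j|\to0$. Since $|g_j|\ge\varepsilon\one_{S_j}$, lattice monotonicity gives
\[
\|g_j\|_Y\ge\varepsilon\|\one_{S_j}\|_Y ,
\]
so it suffices to bound $\|\one_S\|_Y$ from below by a function of $|S|$ that is positive whenever $|S|>0$, for measurable $S\subseteq I$.

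To get this bound, combine \eqref{eq:b-estimate} and \eqref{eq:q-bounds}, which give
\[
\|v\|\ge\frac{1}{16}\,q(v)\ge\frac{1}{16K}\|v\|_\infty .
\]
For $v=V_{\one_S}(t)$, the quantity $\|v\|_\infty$ equals $1$ exactly when the fiber $\{t+\delta(r+N\ell)\}$ meets $S$, and $0$ otherwise. Hence
\[
\|\one_S\|_Y^p\ge\frac{(16K)^{-p}}{\delta}\,\bigl|\{t\in[0,\delta):\text{the fiber of }t\text{ meets }S\}\bigr| .
\]
The fiber projection $x\mapsto x \bmod \delta$ restricted to $S\subseteq I$ is at most $M$-to-one, where $M$ is the number of lattice points $\delta(r+N\ell)$ that fit inside an interval of length $|I|+\delta$, so $M\le |I|/\delta+2$. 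The projected set therefore has measure at least $|S|/M$. This gives
\[
\|\one_S\|_Y^p\ge c\,|S|
\]
with $c>0$ depending only on $K$, $\delta$, and $|I|$.

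Consequently $\varepsilon^p c\,|S_j|\le\|g_j\|_Y^p\to0$, which is convergence in measure on $I$. The only step needing care is the projection counting, namely verifying that the map is finite-to-one with a uniform bound and that the projected set is measurable. Measurability holds because the projected set is a finite union of translates of pieces of $S$, so this step presents no real obstacle.
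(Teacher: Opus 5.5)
Your proof is correct, and it takes a genuinely different route from the paper. The paper gives no argument of its own: it simply invokes Brudnyi's Proposition~1.2(a), a general statement about $F$-lattices applied here to $(Y,\|\cdot\|_Y^p)$, which uses nothing specific about $Y$.

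You instead use the concrete structure of $Y$:
\begin{enumerate}
\item the fibrewise comparison $\|v\|\ge(16K)^{-1}\|v\|_\infty$, obtained from \eqref{eq:q-bounds} and \eqref{eq:b-estimate};
\item the fact that $\|V_{\one_S}(t)\|_\infty$ is the indicator of the projection of $S$ onto $[0,\delta)$;
\item the observation that this projection is at most $(|I|/\delta+2)$-to-one on $I$.
\end{enumerate}

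What this buys is a self-contained, quantitative statement. For every $g\in Y$ and $\varepsilon>0$,
\[
\varepsilon^p\,\bigl|\{x\in I:|g(x)|>\varepsilon\}\bigr|\le(16K)^p\,(|I|+2\delta)\,\|g\|_Y^p .
\]
Thus $Y$ embeds continuously into weak-$L^p(I)$, which is stronger than convergence in measure on $I$, and the argument does not depend on the external reference.

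The citation, on the other hand, is shorter and fully general: it applies verbatim to every $F$-lattice, with no dependence on $K$ or $\delta$. Your constant is tied to one fixed $Y=Y(K,N,A,\delta)$. That suffices here, because the paper only uses the lemma for a fixed $Y_m$: in the completeness proof for $X$, and in identifying the limit in $Y_1$ at the end of the proof of Theorem~\ref{thm:main}. So your version covers every application in the paper.
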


\begin{lemma}\cite[Proposition~1.10]{Brudnyi2015}\label{lem:strong-cont-Y}
Translations are strongly continuous in $Y$:
$\|\tau_hf-f\|_Y\to0$ as $h\to0$ for every $f\in Y$.
\end{lemma}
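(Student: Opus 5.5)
The statement is Lemma~\ref{lem:strong-cont-Y}: translations are strongly continuous on $Y$. The plan is to follow the scheme of \cite[Proposition~1.10]{Brudnyi2015}. We reduce to a dense class of nice functions, using that every $\tau_h$ is an isometry (Proposition~\ref{prop:Y}). For such functions we prove continuity directly from the sup-norm bound \eqref{eq:Y-sup-bound} combined with order continuity.

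\emph{Step 1: reduction to a dense class.} Suppose $f\in Y$ and $g$ is a nice approximant with $\|f-g\|_Y^p<\varepsilon$. By the $p$-triangle inequality and translation isometry,
\[
\|\tau_hf-f\|_Y^p\le \|\tau_h(f-g)\|_Y^p+\|\tau_hg-g\|_Y^p+\|g-f\|_Y^p\le 2\varepsilon+\|\tau_hg-g\|_Y^p .
\]
It therefore suffices to prove the result for $g$ in a dense subclass.

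\emph{Step 2: density of bounded compactly supported functions.} For $f\in Y$, set $f_n:=f\one_{\{|f|\le n,\ |x|\le n\}}$. Then $|f-f_n|\downarrow0$ a.e., so order continuity from Proposition~\ref{prop:Y} gives $\|f-f_n\|_Y\to0$.

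Next, for bounded $g$ supported in $[-n,n]$, choose continuous compactly supported $\psi_k$ with $|\psi_k|\le\|g\|_\infty$, support in $[-n-1,n+1]$, and $\psi_k\to g$ a.e. (Lusin's theorem). Passing to a subsequence, set $w_k:=\sup_{m\ge k}|g-\psi_m|$. Then $w_k\downarrow0$ a.e. and $w_k\le 2\|g\|_\infty\one_{[-n-1,n+1]}\in Y$ by \eqref{eq:Y-sup-bound}. Order continuity then yields $\|g-\psi_k\|_Y\le\|w_k\|_Y\to0$.

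\emph{Step 3: continuous compactly supported $\psi$.} For $|h|\le1$,
\[
|\tau_h\psi-\psi|\le \omega_\psi(|h|)\,\one_{[-R-1,R+1]},
\]
where $\omega_\psi$ is the modulus of uniform continuity and $\supp\psi\subseteq[-R,R]$. By the lattice property and \eqref{eq:Y-sup-bound}, $\|\tau_h\psi-\psi\|_Y\le\omega_\psi(|h|)\to0$ as $h\to0$.

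The main point needing care is Step 2. Order continuity in Proposition~\ref{prop:Y} is stated only for decreasing sequences, so the a.e.\ approximation must be converted into a monotone majorant, which is what the envelope $w_k$ does. Everything else is routine, since the sup-norm bound \eqref{eq:Y-sup-bound} gives uniform control on the fixed compact set.
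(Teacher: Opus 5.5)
Your proof is correct. The paper gives no argument of its own for this lemma. It imports it from \cite[Proposition~1.10]{Brudnyi2015}, a general statement for translation-invariant $F$-lattices with absolutely continuous $F$-norm. That result applies because Proposition~\ref{prop:Y} places $(Y,\norm{\cdot}_Y^p)$ in $\cL$, using the Preliminaries' remark that order continuity implies absolute continuity.

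You instead give a direct, self-contained argument that uses only three properties established in Proposition~\ref{prop:Y}:
\begin{enumerate}
\item Translation isometry, which gives the reduction to a dense class.
\item Order continuity, which gives density first of bounded compactly supported functions and then, through Lusin's theorem and your decreasing envelope $w_k$, of $C_c(\R)$.
\item The bound \eqref{eq:Y-sup-bound}, which turns the uniform modulus of continuity of $\psi$ directly into a $Y$-norm estimate.
\end{enumerate}

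The citation buys brevity and shows the lemma is an instance of a general fact about $\cL$. It does, however, leave the reader to check that $Y$ meets Brudnyi's precise conventions for $F$-lattices and absolute continuity. Your route avoids that dependence and makes this step of the paper self-contained.

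In a general $F$-lattice, your Step~3 would need two things: continuity of scalar multiplication, $\varrho(\varepsilon\one_{[-R-1,R+1]})\to0$, in place of \eqref{eq:Y-sup-bound}; and the membership $\one_{[-R-1,R+1]}\in X$. In $Y$, Proposition~\ref{prop:Y} supplies both.

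Your envelope device is a correct way to use order continuity for decreasing sequences. A slightly shorter alternative: choose $|\{\psi_k\ne g\}|<2^{-k}$, so Borel--Cantelli gives $\one_{\{\psi_k\ne g\}}\to0$ a.e. Then apply absolute continuity to $2\norm{g}_\infty\one_{[-n-1,n+1]}$.
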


\subsection{Construction of a periodic function and  a test function} 

Retain the integer $K\ge2$ and the corresponding
$N$ and $A\subseteq\G_N$ from
Lemma~\ref{lem:sparse}. 
Fix a Lipschitz function $\psi\colon\R\to[0,1]$
with Lipschitz constant $L$ such that
$\operatorname{supp}\psi\subseteq Q$ for a compact
interval $Q$ and $\psi=1$ on $[0,1]$.
Choose \begin{align}\label{rangtedelta}
    0<\delta\le 1/N,
\end{align}
and let $Y$ be the corresponding space from
Proposition~\ref{prop:Y}.

For each $x\in\R$, let $k\in\Z$ be the unique integer
such that $k\delta\le x<(k+1)\delta$. 
Define the
 function $e_A$ by 
\begin{equation}\label{eq:pattern}
 e_A(x)=\one_A \, (k\bmod N)
=
 \begin{cases}
 1,& k\bmod N\in A,\\
 0,& k\bmod N\notin A.
 \end{cases}
\end{equation}
Note that $e_A$ is 
$N\delta$-periodic. 
Indeed, \[
\begin{aligned}
e_A(x+N\delta)
=\one_A((k+N)\bmod N)=\one_A(k\bmod N)=e_A(x).
\end{aligned}
\]
For a weight $w>0$, put
\begin{align}\label{defux}
     u(x):=(w^{-1}\psi e_A)
     (x).
\end{align}
Since $|u|\le w^{-1}$ and $\operatorname{supp}u\subseteq Q$,
the function $u$ is bounded and compactly supported.
Hence $u\in Y$ by Proposition~\ref{prop:Y}.

\begin{proposition}\label{prop:critical}
The function $u$ satisfies
\begin{equation}\label{eq:critical-size}
                 \frac1{16w}\le\|u\|_Y\le\frac1w.
\end{equation}
Moreover, for every $h\in\R$, 
\begin{equation}\label{eq:critical-modulus}
 w\|\tau_hu-u\|_Y
 \le 2^{1/p-1} \bigl(L|h|+K^{-1}\bigr),
\end{equation} 
where $L$ is the Lipschitz constant of $\psi$.
\end{proposition}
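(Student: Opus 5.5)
The plan is to compute the fibers $V_u(t)$ explicitly. For $x=t+\delta(r+N\ell)$ the integer $k$ in \eqref{eq:pattern} is $k=r+N\ell$, so $k\bmod N=r$. Hence
\[
 V_u(t)(r,\ell)=w^{-1}\psi\bigl(t+\delta(r+N\ell)\bigr)\one_A(r).
\]

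\emph{Size estimate \eqref{eq:critical-size}.} The upper bound is immediate from \eqref{eq:Y-sup-bound}, since $|u|\le w^{-1}$ and $u$ is compactly supported. For the lower bound, fix $t\in[0,\delta)$ and take $\ell=0$. Then $t+\delta r\in[0,N\delta)\subseteq[0,1]$ by \eqref{rangtedelta}. Since $\psi=1$ on $[0,1]$, this gives $V_u(t)(r,0)=w^{-1}\one_A(r)$, and therefore $s_{V_u(t)}(r)\ge w^{-1}$ for every $r\in A$. Consequently $\{r:s_{V_u(t)}(r)>\tau\}\supseteq A$ for all $0\le\tau<w^{-1}$. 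Monotonicity of $\phi$ and $\phi(A)=1$ (Lemma~\ref{lem:submeasure}) then give $q(V_u(t))\ge w^{-1}$. By \eqref{eq:b-estimate} this yields $\|V_u(t)\|\ge 1/(16w)$ for every $t$. Averaging in \eqref{eq:Y-norm} gives $\|u\|_Y\ge 1/(16w)$.

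\emph{Modulus estimate \eqref{eq:critical-modulus}.} I would split
\[
 w(\tau_hu-u)=(\tau_h\psi-\psi)\,\tau_he_A+\psi\,(\tau_he_A-e_A)=:g_1+g_2.
\]
Both pieces are bounded and compactly supported, so they lie in $Y$.

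The first piece satisfies $|g_1|\le|\tau_h\psi-\psi|\le L|h|$. So \eqref{eq:Y-sup-bound} gives $\|g_1\|_Y\le L|h|$.

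For $g_2$, write $h=a\delta+s$ as in \eqref{defhdecom}, and put $\varepsilon(t)=\one_{[\delta-s,\delta)}(t)$. For $x=t+\delta(r+N\ell)$ one has $x+h\in[(k+a+\varepsilon(t))\delta,(k+a+\varepsilon(t)+1)\delta)$. Hence
\[
 |V_{\tau_he_A-e_A}(t)(r,\ell)|=|\one_A(r+a+\varepsilon(t))-\one_A(r)|=\one_{D_{-(a+\varepsilon(t)),0}}(r),
\]
which does not depend on $\ell$. Since $0\le\psi\le1$, we get $s_{V_{g_2}(t)}\le\one_{D}$ with $D=D_{-(a+\varepsilon(t)),0}$. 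This vector lies in $c_0$ because $\psi$ has compact support. Therefore, by \eqref{eq:phi-estimates},
\[
 q(V_{g_2}(t))\le\int_0^1\phi(D)\,d\tau\le K^{-1}.
\]
Then $\|V_{g_2}(t)\|\le q(V_{g_2}(t))\le K^{-1}$ for every $t$, and so $\|g_2\|_Y\le K^{-1}$.

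Finally, I would combine the two pieces with the $p$-triangle inequality of $\|\cdot\|_Y$ and the concavity estimate $(\alpha^p+\beta^p)^{1/p}\le 2^{1/p-1}(\alpha+\beta)$. This gives
\[
 w\|\tau_hu-u\|_Y\le2^{1/p-1}\bigl(L|h|+K^{-1}\bigr),
\]
using the homogeneity of $\|\cdot\|_Y$.

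The only delicate step is the fiber bookkeeping for $\tau_he_A$. One has to check that the shift in the discrete index is the same integer $a+\varepsilon(t)$ for all $(r,\ell)$ at a fixed $t$, including the carry into $\ell$ when $r+a+\varepsilon(t)\ge N$. This uniformity is what makes the difference pattern a single set $D_{b,0}$, so that it is covered by one set and $\nu\le1$. The $N$-periodicity of $\one_A$ in $k$ absorbs the carry, which is why the bound is independent of $\ell$.
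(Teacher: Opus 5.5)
Your proposal is correct and follows the paper's proof essentially step for step: the same fiber formula for $V_u(t)$, the same splitting $w(\tau_hu-u)=F_h+G_h$, the same bookkeeping $h=a\delta+s$ in which the carry $\one_{[\delta-s,\delta)}(t)$ makes the fiber of $G_h$ dominated by the single set $(A-a-\one_{[\delta-s,\delta)}(t))\triangle A$, and the same $p$-triangle/concavity combination at the end. The only cosmetic difference is that you take the upper bound $\|u\|_Y\le 1/w$ directly from \eqref{eq:Y-sup-bound}, whereas the paper computes $q(V_u(t))=w^{-1}$ exactly and reads off both bounds from \eqref{eq:b-estimate}.
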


\begin{proof}
For every $t\in[0,\delta)$, $r=0,\ldots,N-1$, and
$\ell\in\Z$,   we have 
\begin{align*}
 V_u(t)(r,\ell)
& \stackrel{\eqref{eq:fiberization}}{=} u\bigl(t+\delta(r+N\ell)\bigr)
\stackrel{\eqref{defux}}{=}
\frac1w\psi\bigl(t+\delta(r+N\ell)\bigr)
e_A\bigl(t+\delta(r+N\ell)\bigr)\\ & 
\stackrel{\eqref{eq:pattern}}{=}
\frac1w\psi\bigl(t+\delta(r+N\ell)\bigr)
\one_A \, (r),
\end{align*} 
where we used the fact that 
$t+\delta(r+N\ell) \in [\delta(r+N\ell), \delta(r+N\ell+1) )$ and  $(r+N\ell) \bmod N=r$. 
Since $0\le\psi\le1$,  
$
\sup_{\ell\in\Z}|V_u(t)(r,\ell)|
    \le w^{-1}\one_A(r).
$ 
On the other hand, $0\le t+\delta r<N\delta
\stackrel{\eqref{rangtedelta}}{\le}
1$,
so $\psi(t+\delta r)=1$. Thus, \(|V_u(t)(r,0)|=\frac1w\one_A(r).\)
This  shows that
\begin{equation}\label{supVut}
    \sup_{\ell\in\Z}|V_u(t)(r,\ell)|
    =w^{-1}\one_A(r).
\end{equation}
 Hence,  \[
\begin{aligned}
 q(V_u(t))
 & \quad  \stackrel{\eqref{defqv}}{=}
 \int_0^\infty
 \phi\left(\left\{r\in\G_N:
 \sup_{\ell\in\Z}|V_u(t)(r,\ell)|>\lambda\right\}\right)
 \,\dd\lambda
 \\
 & \quad 
 \stackrel{\eqref{supVut}}{=}
\int_0^\infty
\phi\bigl(\{r\in\mathbb G_N:w^{-1}\one_A(r)>\lambda\}\bigr)
\,d\lambda
\\
 & \overset{0<w\lambda<1}{=}
 \int_0^{w^{-1}}\phi(A)\,\dd\lambda
=w^{-1}\phi(A)
\stackrel{\eqref{eq:phi-estimates}}{=}w^{-1} . 
\end{aligned}
\] 
Estimate \eqref{eq:b-estimate}
gives
\[
 \frac1{16w} =\frac1{16} q(V_u(t))
 \le\norm{V_u(t)}\le q(V_u(t)) =
 \frac1w,
 \qquad t\in[0,\delta). 
\]
Taking $p$th powers, averaging over $[0,\delta)$,
and then taking $p$th roots gives 
\eqref{eq:critical-size} by ~\eqref{eq:Y-norm}.

Fix $h\in\R$.
We next decompose $\tau_hu-u$ into two terms
corresponding to the changes in $\psi$ and $e_A$.
For $x\in\R$, define
\[
 F_h(x):= \big( 
 \psi(x+h)-\psi(x)
  \big)
  e_A(x+h),
 \quad
 G_h(x):=\psi(x) \big(  e_A(x+h)-e_A(x)
  \big).
\]
Then, 
\begin{align}\label{w(tau_hu-u)(x)decom}
 w(\tau_hu-u)(x)
 \stackrel{\eqref{defux}}{=}
 \psi(x+h)e_A(x+h)-\psi(x)e_A(x)=F_h(x)
   +G_h(x).
\end{align}
Since \(|F_h(x)|\le1\),\(|G_h(x)|\le1\), \(\operatorname{supp}F_h\subseteq Q\cup(Q-h)\) and \(\operatorname{supp}G_h\subseteq Q\), it follows from Proposition ~\ref{prop:Y}  that 
 $F_h, G_h\in Y$. 
The Lipschitz
condition on $\psi$   yields 
\begin{align}\label{normFhYp}
 \|F_h\|_Y\stackrel{\eqref{eq:Y-sup-bound}}{\le } \|F_h\|_\infty\le  L|h|.
\end{align}

To estimate $G_h=\psi(x) \big(  e_A(x+h)-e_A(x)
  \big)$, write $h\stackrel{\eqref{defhdecom}}{=} a\delta+s$, where
$a\in\Z$ and $0\le s<\delta$. 
For every $t\in[0,\delta)$, 
we have 
\begin{align}\label{t+s-c(t)delta}
0\le  t+s-\one_{[\delta-s,\delta)}(t)\delta<\delta, \quad  
 \delta(r+N\ell)
 \le t+\delta(r+N\ell)
<\delta(r+N\ell+1).
\end{align}
In particular, 
\begin{align}\label{1A}
e_A\bigl(t+\delta(r+N\ell)\bigr)
 \stackrel{\eqref{eq:pattern}}{=}
 \one_A\big(
 (r+N\ell)\bmod N
 \big)
 =\one_A(r).
\end{align}
Since 
\begin{align*}
\delta\big(
r+N\ell+a+\one_{[\delta-s,\delta)}(t) 
\big) &\stackrel{\eqref{t+s-c(t)delta}}{\leq } 
t+s-\one_{[\delta-s,\delta)}(t) \delta
+\delta\big(
r+N\ell+a+ \one_{[\delta-s,\delta)}(t) 
\big)
 \\ &\,\, =t+\delta(r+N\ell)+a\delta+s
=
t+\delta(r+N\ell)+h
\\ &  
\stackrel{\eqref{t+s-c(t)delta}}{<}\delta\big(
r+N\ell+a+\one_{[\delta-s,\delta)}(t) +1
\big), 
\end{align*}
the definition 
of $e_A$ 
gives
\begin{align}\label{1A-a-c(t)}
e_A\bigl(t+\delta(r+N\ell)+h\bigr)
&\nonumber \stackrel{\eqref{eq:pattern}}{=}
\one_A\big( 
(
r+N\ell+a+\one_{[\delta-s,\delta)}(t) 
) \bmod N
\big)
\\ & \,\, = \one_A(r+a+\one_{[\delta-s,\delta)}(t))
 =\one_{A-a-\one_{[\delta-s,\delta)}(t)}(r).
\end{align}
Using $0\le\psi\le1$ and the identity
$|\one_E-\one_F|=\one_{E\triangle F}$, we obtain
\[
\begin{aligned}
 |V_{G_h}(t)(r,\ell)|
&~\,\,\,
\stackrel{\eqref{eq:fiberization}}{=} \left| G_h\bigl(t+\delta(r+N\ell)\bigr)
\right|\\ &
~~\,\, \,\, \,
=\left| \psi\bigl(t+\delta(r+N\ell)\bigr) \big(  e_A\bigl(t+\delta(r+N\ell)+h\bigr)-e_A\bigl(t+\delta(r+N\ell)\bigr)
  \big)
  \right|
\\&\stackrel{\eqref{1A}, 
\eqref{1A-a-c(t)}}{\leq}
\left|\psi\bigl(t+\delta(r+N\ell)\bigr)\right|
 \left|\one_{A-a-\one_{[\delta-s,\delta)}(t) }(r)-\one_A(r)\right|\\
 &~~\,\, \,\, \,  \le\one_{(A-a-\one_{[\delta-s,\delta)}(t) )\triangle A}(r). 
\end{aligned}
\] 
For fixed $t\in[0,\delta)$, the preceding bound gives
\[ \sup_{\ell\in\Z}|V_{G_h}(t)(r,\ell)|
 \le\one_{(A-a-\one_{[\delta-s,\delta)}(t))\triangle A}(r).
\]
Consequently, for $0<\lambda<1$,
\begin{align}\label{setsubseteqrelation}
 \left\{r\in\G_N:
 \sup_{\ell\in\Z}|V_{G_h}(t)(r,\ell)|>\lambda\right\}
 \subseteq (A-a-\one_{[\delta-s,\delta)}(t))\triangle A.
\end{align}
For $\lambda\ge1$, 
the  set $\left\{r\in\G_N:
 \sup_{\ell\in\Z}|V_{G_h}(t)(r,\ell)|>\lambda\right\}$ is empty. 
By  the monotonicity of $\phi$ (see Lemma \ref{lem:submeasure}),
\[
\begin{aligned}
 q(V_{G_h}(t))
 &\stackrel{\eqref{defqv}}{=}
 \int_0^\infty
 \phi\left(\left\{r\in\G_N:
 \sup_{\ell\in\Z}|V_{G_h}(t)(r,\ell)|>\lambda\right\}\right)
 \,\dd\lambda\\
 &
\stackrel{\eqref{setsubseteqrelation}}{\le}
 \int_0^1
 \phi\bigl((A-a-\one_{[\delta-s,\delta)}(t) )\triangle A\bigr)\,\dd\lambda\\
 &\,\, =\phi\bigl((A-a-\one_{[\delta-s,\delta)}(t))\triangle A\bigr)\stackrel{\eqref{eq:phi-estimates}}{\le }
 K^{-1}.
\end{aligned}
\]
Since $\norm{V_{G_h}(t)}
\stackrel{\eqref{eq:b-estimate}}
 {
 \le}
 q(V_{G_h}(t))$, it follows from  \eqref{eq:Y-norm} that
\begin{align}\label{normGhYp}
 \norm{G_h}_Y^p \leq K^{-p}.
\end{align}

The $p$-triangle inequality now yields
\begin{align*}
\Big(  w\norm{\tau_hu-u}_Y
\Big)^p \stackrel{\eqref{w(tau_hu-u)(x)decom}}{=}
 \norm{F_h+G_h}_Y^p
 \stackrel{\text{Prop.\ref{prop:Y}}}{\le}
 \norm{F_h}_Y^p+\norm{G_h}_Y^p
\stackrel{\eqref{normFhYp}, 
 \eqref{normGhYp} }
 {\le}
 (L|h|)^p+K^{-p}.
\end{align*}
Using
$(a^p+b^p)^{1/p}\le2^{1/p-1}(a+b)$ for $a,b\ge0$,
we conclude that
\[
 w\norm{\tau_hu-u}_Y
 \le\bigl((L|h|)^p+K^{-p}\bigr)^{1/p}
 \le 2^{1/p-1} \bigl(L|h|+K^{-1}\bigr).
\]
This proves \eqref{eq:critical-modulus}.
\end{proof}

We next establish a uniform estimate for the
$Y$-norms of indicators of sets of small measure.

\begin{lemma}\label{lem:small-set}
Let
$Q\subseteq\R$ be a compact interval.
Then, for every measurable set $E\subseteq Q$,
\[
\|\one_E\|_Y^p\le\frac{|E|}{\delta}.
\]
\end{lemma}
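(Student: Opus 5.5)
We prove $\|\one_E\|_Y^p\le|E|/\delta$ by computing fiberwise: for each $t\in[0,\delta)$ we bound the $p$-norm of the fiber $V_{\one_E}(t)$ by the number of its nonzero coordinates, and then average over $t$.

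\textbf{Step 1: the fiber is a finite $0$--$1$ sequence.} Fix $t\in[0,\delta)$. By \eqref{eq:fiberization},
\[
V_{\one_E}(t)(r,\ell)=\one_E\bigl(t+\delta(r+N\ell)\bigr),
\]
so $V_{\one_E}(t)$ takes only the values $0$ and $1$. Since $E\subseteq Q$ and $Q$ is bounded, only finitely many coordinates are nonzero. Let
\[
n_E(t):=\#\{(r,\ell)\in\G_N\times\Z: t+\delta(r+N\ell)\in E\}.
\]

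\textbf{Step 2: fiberwise bound via the Frink--Aoki decomposition.} Decompose $V_{\one_E}(t)=\sum_{j=1}^{n_E(t)} e_j$, where each $e_j$ is a unit vector supported on one nonzero coordinate. For a unit vector, $\|e_j\|_\infty=1$, so \eqref{eq:q-bounds} gives $q(e_j)\le 1$. Definition \eqref{eq:frink} therefore yields
\[
\|V_{\one_E}(t)\|^p\le\sum_{j=1}^{n_E(t)} q(e_j)^p\le n_E(t).
\]
If $n_E(t)=0$, the fiber is zero and the bound holds trivially.

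\textbf{Step 3: averaging over $t$.} By \eqref{eq:Y-norm},
\[
\|\one_E\|_Y^p\le\frac1\delta\int_0^\delta n_E(t)\,\dd t.
\]
The map $(t,r,\ell)\mapsto t+\delta(r+N\ell)$ is a measure-preserving bijection from $[0,\delta)\times\G_N\times\Z$ (Lebesgue measure times counting measure) onto $\R$. Hence Tonelli's theorem gives
\[
\int_0^\delta n_E(t)\,\dd t=\sum_{r,\ell}\int_0^\delta \one_E\bigl(t+\delta(r+N\ell)\bigr)\,\dd t=|E|.
\]
This is exactly the claimed inequality.

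\textbf{Main obstacle.} The delicate point is Step 2: we need the $p$-norm of a sum of $n$ unit vectors to be at most $n$, rather than to grow like $n^{1/p}$. This is what the $p$-subadditivity built into \eqref{eq:frink} provides. We do not try to exploit any smallness of $\phi$ here, since a crude counting bound suffices for the statement. Measurability of $t\mapsto n_E(t)$ is routine, because $n_E$ is a countable sum of measurable functions.
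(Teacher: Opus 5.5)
Your proof is correct and follows the paper's route: bound $\|V_{\one_E}(t)\|^p$ on each fiber by $\sum_{k\in\Z}\one_E(t+\delta k)$, then integrate over $[0,\delta)$ with Tonelli to get $|E|/\delta$. The only difference is the fiberwise step. The paper avoids decomposing into unit vectors by using $\|v\|\le q(v)\le\|v\|_\infty$ from \eqref{eq:q-bounds} and \eqref{eq:b-estimate}, together with $\|V_{\one_E}(t)\|_\infty^p=\|V_{\one_E}(t)\|_\infty\le\sum_k\one_E(t+\delta k)$ for a $0$--$1$ sequence, so the point you flag as delicate is in fact immediate.
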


\begin{proof}
Let $E\subseteq Q$ be measurable.
Since $\one_E$ is bounded and compactly supported,
Proposition~\ref{prop:Y} gives $\one_E\in Y$.
Since 
\[
\|V_{\one_E}(t)\|^p
\stackrel{\eqref{eq:q-bounds}, \eqref{eq:b-estimate} }{\leq}
\|V_{\one_E}(t)\|_\infty^p
=\sup_{k\in\Z}\one_E(t+\delta k)
\stackrel{\eqref{defvinftyoriginal}}{=} 
\|V_{\one_E}(t)\|_\infty, 
\]
it follows that 
\begin{align*}
\|\one_E\|_Y^p
&\stackrel{\eqref{eq:Y-norm}}{=}
\frac1\delta\int_0^\delta
  \|V_{\one_E}(t)\|^p\,\dd t
  \le
  \frac1\delta\int_0^\delta
  \sup_{k\in\Z}\one_E(t+\delta k)\,\dd t\\
&\, \, \le\frac1\delta\int_0^\delta
  \sum_{k\in\Z}\one_E(t+\delta k)\,\dd t
  =\frac1\delta\sum_{k\in\Z}
  \int_0^\delta\one_E(t+\delta k)\,\dd t\\
&\,\, =\frac1\delta\sum_{k\in\Z}
  \int_{\delta k}^{\delta(k+1)}\one_E(x)\,\dd x
=\frac1 \delta
\int_{\R}\one_E(x)\,\dd x 
=\frac{|E|}{\delta}.
\end{align*}
\end{proof}

\subsection{Construction of the function lattice $X$}

Recall that we have fixed that $\psi$  is  a Lipschitz function on $\R$, with the  Lipschitz constant $L$, such that $0\le\psi\le1$,
$\operatorname{supp}\psi\subseteq  Q$ for a compact interval $Q$,
and $\psi=1$ on $[0,1]$.

We now construct an order-continuous,
translation-invariant $p$-Banach function lattice
$X$ on $\R$.
We first choose spaces
$Y_n=Y(K_n,N_n,A_n,\delta_n)$ and positive weights
$w_n$ recursively, and then define $X$ by imposing
a weighted $c_0$ condition.

We choose the parameters recursively. 
Start with $w_1=1$. For $n\ge2$, the spaces $Y_1,\ldots,Y_{n-1}$ and the
positive weights $w_1,\ldots,w_n$ have already been fixed. 
For $n=1$, choose any $0<d_1\le 2^{-1} $.
Choose
\begin{align}\label{defdnrange}
    0<d_n\le
    \min\left\{
        2^{-n},
        \min_{1\le m<n}
        \delta_m
        \left(2^{-n}\frac{w_n}{w_m}\right)^p
    \right\}, \qquad  n\geq 2.
\end{align}
By Lemma~\ref{lem:small-set}, every measurable
set $E\subseteq Q$ with $|E|<d_n$ satisfies
\begin{equation}\label{eq:past-small}
    \norm{\one_E}_{Y_m}
    \le
    \left(\frac{|E|}{\delta_m}\right)^{1/p}
    \le
    2^{-n}\frac{w_n}{w_m},
    \qquad \forall m<n.
\end{equation}
Choose an integer $K_n$ such that
\begin{equation}\label{eq:K-choice}
    K_n>\max\left\{2^n,\frac{4|Q|}{d_n}\right\}.
\end{equation}
Apply Lemma \ref{lem:sparse} to obtain an integer
$N_n:=K_n^{8 K_n}$ and a set $A_n \subseteq\mathbb G_{N_n}$ such that 
\begin{align}\label{fracAnNnlefrac2Kn}
|A_n|\le\frac{2N_n}{K_n}<N_n.
\end{align}
Choose
\begin{align}\label{choosePn}
    0< \delta_n <\frac{d_n}{4 N_n} \,\,\,   \left(\stackrel{\eqref{defdnrange}}{\le} \frac{ 2^{-n}}{4 N_n}< 
    \frac{ 1}{ N_n}
    \right).
\end{align}
With these parameters fixed, let
\[
    Y_n:=Y(K_n,N_n,A_n,\delta_n)
\]
be the $p$-Banach function lattice constructed in
Proposition~\ref{prop:Y}, where $p=1/2$.  
To complete the $n$th stage, choose $w_{n+1}$ such that
\begin{equation}\label{eq:weight-choice}
                       0<w_{n+1}\le2^{-n} w_n.
\end{equation}
In particular, $w_n\downarrow0$ and $w_n\le1$ (since $w_1=1$).

We now combine the spaces $Y_m$ by imposing
a weighted $c_0$ condition:
\begin{equation}\label{eq:X}
 X:=\left\{f\in\bigcap_{m\ge1}Y_m:
      w_m\|f\|_{Y_m}\to 0 \,\, \text{as}\,\, m\to 
     \infty
     \right\},  \quad  \|f\|_X:=\sup_{m\ge1} w_m\|f\|_{Y_m}.
\end{equation}

\begin{proposition}
\label{prop:X}
The space $X$ is an  order-continuous $p$-Banach function lattice.
Every real translation is a surjective isometry of $X$,
and the translation group is strongly continuous on $X$.  Moreover, every bounded compactly supported \(g\) belongs to \(X\) and satisfies \(\|g\|_X\le\|g\|_\infty\).
\end{proposition}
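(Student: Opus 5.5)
The plan is to verify the properties of $X$ in four blocks: the $p$-norm with the lattice property, completeness, translation invariance with strong continuity, and order continuity. The main tools are Proposition~\ref{prop:Y}, Lemma~\ref{lem:strong-cont-Y}, and the facts that $w_m\downarrow0$ and $w_m\le1$.

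\textbf{Algebraic and lattice structure.} For $f,g\in X$ and each $m$,
\[
w_m^p\|f+g\|_{Y_m}^p\le w_m^p\|f\|_{Y_m}^p+w_m^p\|g\|_{Y_m}^p .
\]
Taking suprema gives $\|f+g\|_X^p\le\|f\|_X^p+\|g\|_X^p$. The same termwise bound shows that $w_m\|f+g\|_{Y_m}\to0$, so $X$ is a vector space. Homogeneity is immediate. Definiteness holds because $\|f\|_X\ge w_1\|f\|_{Y_1}$. If $|f|\le|g|$ with $g\in X$, the lattice property of each $Y_m$ gives $f\in Y_m$ and $w_m\|f\|_{Y_m}\le w_m\|g\|_{Y_m}\to0$, so $f\in X$ and $\|f\|_X\le\|g\|_X$.

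\textbf{Bounded compactly supported functions and completeness.} If $g$ is bounded with compact support, then \eqref{eq:Y-sup-bound} gives $w_m\|g\|_{Y_m}\le w_m\|g\|_\infty$. This tends to $0$ because $w_m\downarrow0$, and it is at most $\|g\|_\infty$ because $w_m\le1$. For completeness, take an $X$-Cauchy sequence $(f_k)$. It is Cauchy in each $Y_m$, so it has a limit $f^{(m)}\in Y_m$. By Lemma~\ref{lem:Y-local-measure}, $Y_m$-convergence implies convergence in measure on bounded intervals. Hence all the $f^{(m)}$ coincide a.e. with a single $f$, and $f\in\bigcap_m Y_m$. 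The standard $c_0$ argument then finishes: given $\varepsilon$, choose $k_0$ with $\sup_m w_m\|f_k-f_j\|_{Y_m}<\varepsilon$ for $k,j\ge k_0$. Letting $j\to\infty$ in each fixed $m$ yields $\sup_m w_m\|f_k-f\|_{Y_m}\le\varepsilon$. The $p$-triangle inequality, together with $w_m\|f_{k_0}\|_{Y_m}\to0$, then gives $\limsup_m w_m\|f\|_{Y_m}\le\varepsilon$. So $f\in X$ and $f_k\to f$ in $X$.

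\textbf{Translations.} By Proposition~\ref{prop:Y}, each $\tau_h$ is a surjective isometry of every $Y_m$. It therefore preserves both the membership condition and the norm in \eqref{eq:X}, and $\tau_{-h}$ is its inverse. For strong continuity, fix $f\in X$ and $\varepsilon>0$, and choose $M$ with $w_m\|f\|_{Y_m}<\varepsilon$ for $m>M$. For those $m$,
\[
w_m^p\|\tau_hf-f\|_{Y_m}^p\le 2\varepsilon^p .
\]
For the finitely many $m\le M$, Lemma~\ref{lem:strong-cont-Y} gives $\|\tau_hf-f\|_{Y_m}\to0$ as $h\to0$. Hence $\limsup_{h\to0}\|\tau_hf-f\|_X\le2^{1/p}\varepsilon$.

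\textbf{Order continuity.} Let $0\le f_j\downarrow0$ a.e. with $f_1\in X$. Since $f_j\le f_1$, we have $w_m\|f_j\|_{Y_m}\le w_m\|f_1\|_{Y_m}<\varepsilon$ for all $m>M$ and all $j$. For $m\le M$, order continuity of $Y_m$ (Proposition~\ref{prop:Y}) gives $\|f_j\|_{Y_m}\to0$. Therefore $\limsup_j\|f_j\|_X\le\varepsilon$.

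I expect the only delicate step to be completeness, specifically identifying the $Y_m$-limits with one another. Lemma~\ref{lem:Y-local-measure} handles this by passing to a.e.-convergent subsequences on bounded intervals. The remaining steps follow the same pattern throughout: a uniform tail bound from the $c_0$ condition, plus finitely many coordinates handled by the corresponding property of each $Y_m$.
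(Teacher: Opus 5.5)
Your proposal is correct and follows the paper's proof essentially step for step. The shared ingredients are coordinatewise transfer of the $p$-norm and ideal properties from the $Y_m$, completeness by identifying the $Y_m$-limits via local convergence in measure followed by the standard $c_0$ tail argument, and strong and order continuity by splitting into a tail $m>M$ controlled uniformly by the $c_0$ condition plus finitely many $m\le M$ handled inside each $Y_m$. You also make explicit two points the paper leaves tacit: that $X$ is closed under addition, and that $\|g\|_X\le\|g\|_\infty$ follows from $w_m\le 1$.
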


\begin{proof}
Homogeneity, the ideal property, and the $p$-triangle inequality follow
coordinatewise from the  properties of the $Y_m$. Definiteness follows from $w_1>0$ and the
definiteness of $\norm{\cdot}_{Y_1}$.
Thus $X$ is a $p$-normed function lattice.

We first prove completeness. 
Let $(f_j)$ be a Cauchy sequence in $X$.
For each fixed $m\ge1$, the definition of $\norm{\cdot}_X$
gives
\[
 \norm{f_j-f_k}_{Y_m}
 \le w_m^{-1}\norm{f_j-f_k}_X
 \to 0
 \qquad\text{as }j,k\to\infty.
\]
Since $w_m>0$ is fixed, $(f_j)$ is Cauchy in $Y_m$.
By the completeness of $Y_m$, there exists $g_m\in Y_m$
such that
\begin{align*}
 \norm{f_j-g_m}_{Y_m}\to 0
 \qquad\text{as }j\to\infty.
\end{align*}
By Lemma \ref{lem:Y-local-measure},
$f_j\to g_m$ locally in measure for each $m\ge1$. 
Uniqueness of the limit in measure gives
$g_m=g_1$ almost everywhere on $\R$. 
Denote the common equivalence class of the functions $g_m$
by $f$. 
Then,  $ 
    f\in\mathop{\bigcap}\limits_{m\ge1}Y_m
$ 
    and
$\norm{f_j-f}_{Y_m}\to0$ for every $m\ge1$.

Let $\varepsilon>0$. Choose  $j_0$ such that
$\norm{f_j-f_k}_X<\varepsilon$ for all $j,k\ge j_0$. 
Consequently,
\[
 w_m\norm{f_j-f_k}_{Y_m}\stackrel{\eqref{eq:X}}{\le} \norm{f_j-f_k}_X < \varepsilon
 \qquad \forall m\ge1,\ j,k\ge j_0.
\]
Fixing $j\ge j_0$ and letting $k\to\infty$, we obtain
$ 
 w_m\norm{f_j-f}_{Y_m}\le\varepsilon
$ for all $ m\ge1$.
Hence, 
\begin{align}\label{supmge1wmnormfjfYmlevarepsilon}
 \sup_{m\ge1}w_m\norm{f_j-f}_{Y_m}\le\varepsilon, \qquad j\ge j_0.
\end{align}
For any such \(j\), the \(p\)-triangle inequality and \(f_j\in X\) imply
\begin{align*}
\mathop{\limsup}\limits_{m\to\infty}(w_m\norm{f}_{Y_m})^p
 \stackrel{ \eqref{supmge1wmnormfjfYmlevarepsilon} 
 }{\le}
 \varepsilon^p.
\end{align*}
Since  $\varepsilon>0$ is arbitrary, 
it follows that
\begin{align*}
    w_m\norm{f}_{Y_m}\to0  \qquad\text{as }m\to\infty.
    \end{align*}
Therefore, $f\in X$  and 
\[
 \norm{f_j-f}_X
   \stackrel{\eqref{eq:X}
 }{=}\sup_{m\ge1}w_m\norm{f_j-f}_{Y_m}
   \stackrel{\eqref{supmge1wmnormfjfYmlevarepsilon}
 }{\le}
 \varepsilon,
 \qquad j\ge j_0.
\]
Hence,  $f_j\to f$ in $X$, proving completeness.

Combining the $c_0$ condition in \eqref{eq:X}
with lattice monotonicity and the order continuity
of each $(Y_m, \norm{\cdot}_{Y_m})$, we conclude that $X$ is order continuous.
Let $f\in X$ and $h\in\R$.
Since translations are isometries on each $Y_m$ (see Proposition \ref{prop:Y}),
 it follows that $\tau_hf\in X$ and
\[
 \norm{\tau_hf}_X
 \stackrel{\eqref{eq:X}
 }{=}\sup_{m\ge1}w_m\norm{\tau_hf}_{Y_m}
 =\sup_{m\ge1}w_m\norm{f}_{Y_m}
 \stackrel{\eqref{eq:X}
 }{=} 
 \norm{f}_X.
\]
Surjectivity of the map $\tau_h:X\to X$  is immediate. 

To prove strong continuity of translations on $X$, fix $f\in X$ and
$\varepsilon>0$. 
Since $w_m\norm{f}_{Y_m}\stackrel{\eqref{eq:X}}{\to} 0$ 
as $m\to\infty$,
choose an integer $M\ge2$ such that
\begin{align}\label{supwmnormfYmvare}
 \sup_{m\ge M}w_m\norm{f}_{Y_m}
 <2^{-1/p}\varepsilon.
\end{align}
For every $h\in\R$, the $p$-triangle inequality and
the isometry of translations on $Y_m$ (see Proposition \ref{prop:Y}) give
\[
\begin{aligned}
\big(  w_m \norm{\tau_hf-f}_{Y_m}
\big)^p
 \le w_m^p
\bigl(\norm{\tau_hf}_{Y_m}^p+\norm{f}_{Y_m}^p\bigr)=2w_m^p\norm{f}_{Y_m}^p.
\end{aligned}
\]
Consequently,
\begin{align}\label{1.1}
 \sup_{m\ge M}w_m\norm{\tau_hf-f}_{Y_m}
 \le2^{1/p}\sup_{m\ge M}w_m\norm{f}_{Y_m}
\stackrel{\eqref{supwmnormfYmvare}}{<}
\varepsilon.
\end{align}
For each $m<M$, Lemma~\ref{lem:strong-cont-Y} gives
$w_m\norm{\tau_hf-f}_{Y_m}\to0$ as $h\to0$.
Since there are only finitely many such indices,
there exists $\rho>0$ such that
\begin{align}\label{1.2}
 \max_{1\le m<M}w_m\norm{\tau_hf-f}_{Y_m}
 <\varepsilon
 \qquad\text{whenever }|h|<\rho.
\end{align}
Combining  estimates \eqref{1.1} and \eqref{1.2}, we obtain
\[
 \norm{\tau_hf-f}_X
 \stackrel{\eqref{eq:X}
 }{=} \sup_{m\ge1}w_m\norm{\tau_hf-f}_{Y_m}
 <\varepsilon
 \qquad\text{whenever }|h|<\rho.
\]
Thus, $\norm{\tau_hf-f}_X\to0 $  as $ h\to0.$

Finally, let $g$ be bounded and compactly supported. 
By Proposition \ref{prop:Y},
\[
    g\in Y_m,
    \qquad
    \norm{g}_{Y_m}\le\norm{g}_\infty
    \quad m\ge1.
\]
It follows from $w_m\to0$ that 
\[
 0\le w_m\norm{g}_{Y_m}
 \le w_m\norm{g}_\infty\to 0.
\]
Thus, $g\in X$ by \eqref{eq:X}.
\end{proof}

\begin{remark}\label{rem:fatou}
Recall that a function lattice $E$ has the
Fatou property if $0\le g_j\uparrow g$ a.e. and
$\sup_j\|g_j\|_E<\infty$ imply
$g\in E$ and $\|g_j\|_E\uparrow\|g\|_E$ \cite{MR928802}. 
The space $(X, \norm{\cdot}_{X})$ (see  \eqref{eq:X}) 
does not have the Fatou property. 
Indeed, let $g_j=\one_{[-j,j]}$.  By
Proposition~\ref{prop:X}, we have $g_j\in X$ and
$\norm{g_j}_X\le\norm{g_j}_\infty=1$ for every $j\ge1$. 
On the other hand, $g_j\uparrow\one_{\R}$ pointwise.
Since 
for each fixed $m$, 
\[
 V_{\one_{\R}}(t)(r,\ell)
\stackrel{\eqref{eq:fiberization}}{=}\one_{\R}\bigl(t+\delta_m(r+N_m\ell)\bigr)
 =1
\]
for every $t\in[0,\delta_m)$, $r\in\G_{N_m}$, and
$\ell\in\Z$, it follows that  $V_{\one_{\R}}(t)$ does not belong to
$c_0(\G_{N_m}\times\Z)$.  
Consequently, $$\one_{\R}\notin Y_m
\stackrel{\text{Prop.\ref{prop:Y}}}{=}
\left\{
f\in L^0(\R):
V_f(t)\in c_0(\G_{N_m}\times\Z)
\text{ for a.e. }t\in[0,\delta_m),
~~\|f\|_{Y_m}<\infty
\right\}$$
for any $m$, and
therefore $\one_{\R}\notin X$ (see \eqref{eq:X}). 
Thus, $X$ does not have  the Fatou property.
\end{remark}



\begin{proof}[Proof of Theorem \ref{thm:main}] 
Proposition \ref{prop:X} establishes assertions (1) and (2). It remains to construct the sequence. 
For $n\ge1$, define 
\begin{equation*}
        f_n:=w_n^{-1}\psi e_{A_n},
\end{equation*} 
where $e_{A_n}$ is  the periodic function defined by
\eqref{eq:pattern} with parameters
$A_n$, $N_n$, and $\delta_n$. 
Every $f_n$ is  bounded and  compactly supported,  and hence belongs to $X$ 
by Proposition \ref{prop:X}.  
Proposition~\ref{prop:critical}  gives 
\begin{equation}\label{eq:own}
                  \frac1{16}\le w_n\|f_n\|_{Y_n}\le1.
\end{equation}

Now, we estimate $\bigl|\{x\in Q:e_{A_n}(x)=1\}\bigr|$. 
For every $j\in\Z$, the definition of $e_{A_n}$ yields
\[
\begin{aligned}
 &\quad\,\,~  \bigl|
 \{x\in[j N_n \delta_n,(j+1) N_n \delta_n ):e_{A_n}(x)=1\}\bigr|
\\  & \,\,= \left|
  \bigsqcup_{k=jN_n}^{(j+1)N_n-1}
    \{x\in[k\delta_n,(k+1)\delta_n):e_{A_n}(x)=1\} 
   \right| 
 \\ &\stackrel{\eqref{eq:pattern} }{=}
 \delta_n
   \sum_{k=jN_n}^{(j+1)N_n-1}\one_{A_n}(k\bmod N_n) =\delta_n
   \sum_{r=0}^{N_n-1}\one_{A_n}(r)=|A_n|\delta_n.
\end{aligned}
\]
Set
\[
 M:=\left| 
 \{j\in\Z:[j N_n \delta_n,(j+1) N_n \delta_n )\subseteq Q\}
 \right| .
\]
 Then $M N_n \delta_n\le|Q|$.
Apart from endpoints, the remaining portion of $Q$
consists of at most two intervals, each of length
at most $N_n \delta_n$.
Consequently,
\begin{align}\label{eq:support-measure}
& \nonumber \quad  \bigl|\{x\in Q:e_{A_n}(x)=1\}\bigr|
\\  & \nonumber 
\,\, \le
\sum_{\substack{j\in\Z\\
[jN_n\delta_n,(j+1)N_n\delta_n)\subseteq Q}}
\bigl|
\{x\in[jN_n\delta_n,(j+1)N_n\delta_n):
e_{A_n}(x)=1\}
\bigr|
+2N_n\delta_n
\\ &\nonumber  \,\, \le M|A_n|\delta_n+2N_n \delta_n
=\frac{|A_n|}{N_n}M N_n \delta_n+2 N_n \delta_n
\le\frac{|A_n|}{N_n}|Q|+2 N_n \delta_n\\
 &
\stackrel{\eqref{fracAnNnlefrac2Kn}}{\le}
 \frac{2|Q|}{K_n}+2 N_n \delta_n
 \stackrel{\eqref{eq:K-choice}, \eqref{choosePn}}{<}
 \frac{d_n}{2}+\frac{d_n}{2}
 =d_n.
\end{align}

Since $0\le\psi\le1$, it follows that
\begin{equation}\label{1.3}
    |f_n|
    \le
    w_n^{-1}\one_{\{x\in Q:e_{A_n}(x)=1\}}.
\end{equation}
If $m<n$, 
using the ideal property and homogeneity of $Y_m$, we obtain
\begin{equation}\label{eq:past}
\begin{aligned}
 w_m\norm{f_n}_{Y_m}
 \stackrel{\eqref{1.3} }{\le}\frac{w_m}{w_n}
 \norm{\one_{\{e_{A_n}=1\}\cap Q}}_{Y_m}\stackrel{\eqref{eq:support-measure}, \eqref{eq:past-small} }{\le}
 \frac{w_m}{w_n}
 \left(2^{-n}\frac{w_n}{w_m}\right)
 =2^{-n}.
\end{aligned}
\end{equation}
We next consider $m>n$.
Since $|\psi e_{A_n}|\le1$ and $\psi e_{A_n}$ is compactly
supported for every $n\ge1$, Proposition~\ref{prop:Y} gives
$\norm{\psi e_{A_n}}_{Y_m}\le1$ for every $m\ge1$.
By homogeneity and the monotonicity of $(w_m)$,
\begin{equation}\label{eq:future}
 w_m\norm{f_n}_{Y_m}=
\frac{w_m}{w_n} \norm{\psi e_{A_n}}_{Y_m} 
 \le\frac{w_m}{w_n}
 \le 
 \frac{w_{n+1}}{w_n}   
\stackrel{\eqref{eq:weight-choice}}{\le}
 \frac{2^{-n}w_n}{w_n}=
2^{-n}, \qquad 
\forall m>n.
\end{equation}
Hence, we conclude that 
\begin{align}\label{cases}
    \begin{cases}
w_m\|f_n\|_{Y_m}
\stackrel{\eqref{eq:past}, \eqref{eq:future}}{\le}
2^{-n},&
m\neq n,
\\
\dfrac1{16} 
\stackrel{\eqref{eq:own}}{\le} w_n\|f_n\|_{Y_n}
\stackrel{\eqref{eq:own}}{\le} 1,&m=n.
\end{cases}
\end{align}
Thus, $ 
    \frac1{16}\le 
 w_n\|f_n\|_{Y_n}
\le  \
\mathop{\sup}\limits_{m\ge1} w_m \norm{f_n}_{Y_m}
 \le1,$ 
i.e., \begin{equation}\label{eq:norm-bounds} 
 \frac1{16}\le  \|f_n\|_X  \leq 1 \quad   (\text{by }~ \eqref{eq:X}).
\end{equation}

We next prove uniform translation continuity of $(f_n)$.
For $m\ne n$, the $p$-triangle inequality and translation
isometry give 
\[
\begin{aligned}
\big(
w_m \norm{\tau_hf_n-f_n}_{Y_m}
\big)^p 
\le  
 w_m^p 
\bigl(\norm{\tau_hf_n}_{Y_m}^p+\norm{f_n}_{Y_m}^p\bigr)
\stackrel{\text{Prop.\ref{prop:Y}}}{=}2w_m^p\norm{f_n}_{Y_m}^p
 \stackrel{\eqref{cases} }{\leq} 2(2^{-n})^p.
\end{aligned}
\]
Hence,
\begin{equation}\label{eq:cross-modulus}
 w_m\norm{\tau_hf_n-f_n}_{Y_m}
 \le2^{1/p}2^{-n},
 \qquad m\ne n.
\end{equation}
For $m=n$, \eqref{eq:critical-modulus} in  Proposition \ref{prop:critical}  yields
\begin{equation}\label{eq:own-modulus}
 w_n\norm{\tau_hf_n-f_n}_{Y_n}
 \le 2^{1/p} (L|h|+K_n^{-1})
\stackrel{\eqref{eq:K-choice}}{\le}
2^{1/p} (L|h|+2^{-n}). 
\end{equation}
Taking the supremum over $m$, we obtain,  for every
$n\ge1$ and $h\in\R$,
\begin{align}\label{1.5}
 \norm{\tau_hf_n-f_n}_X
\stackrel{\eqref{eq:X},  \eqref{eq:cross-modulus}, \eqref{eq:own-modulus}}{ \le}
\max\bigl\{2^{1/p}2^{-n},
               2^{1/p} (L|h|+2^{-n})\bigr\}
           = 2^{1/p} (L|h|+2^{-n}).
\end{align}
Let $\varepsilon>0$. Choose an integer $n_0\ge2$
and $\rho_0>0$ such that
\begin{align}\label{2novare}
    2^{1/p}2^{-n_0}<\frac{\varepsilon}{2},
    \qquad  2^{1/p}L\rho_0<\frac{\varepsilon}{2}.
\end{align}
If  $n\ge n_0$ and $|h|<\rho_0$, then 
\begin{align}\label{1.6}
 \sup_{n\ge n_0}\norm{\tau_hf_n-f_n}_X
\stackrel{\eqref{1.5}, \eqref{2novare}}{<}
\varepsilon.
\end{align}
For each $1\le n<n_0$, Proposition~\ref{prop:X} gives
$\norm{\tau_hf_n-f_n}_X\to0$ as $h\to0$.
There exists $\rho_1>0$ such that
\begin{align}\label{1.7}
 \max_{1\le n<n_0}\norm{\tau_hf_n-f_n}_X<\varepsilon
 \qquad\text{whenever }|h|<\rho_1.
\end{align}
Combining estimates \eqref{1.6} and \eqref{1.7}, we have
$\mathop{\sup}\limits_{n\ge1}\norm{\tau_hf_n-f_n}_X<\varepsilon$
whenever $|h|<\min\{\rho_0,\rho_1\}$.
Therefore,
\begin{equation*}
 \lim_{h\to0}\sup_{n\ge1}
 \norm{\tau_hf_n-f_n}_X=0.
\end{equation*}

 Finally, we show that $(f_n)$ has no convergent
subsequence in $X$.
Taking $m=1$ in \eqref{eq:past} and using $w_1=1$, we obtain, for $n\ge2$,
\[
    \norm{f_n}_{Y_1}
    \le 2^{-n}\to0. 
\] 
Thus $f_n\to0$ in $Y_1$.
Suppose that a subsequence $(f_{n_j})$ converges
to some $f\in X$. By \eqref{eq:X},
\[
    \norm{f_{n_j}-f}_{Y_1}
    \le w_1^{-1}\norm{f_{n_j}-f}_X
    \to0.
\]
Uniqueness of the limit in $Y_1$ (see Lemma~\ref{lem:Y-local-measure}) therefore gives
$f=0$ a.e.
Consequently,
\[
 \norm{f_{n_j}}_X
 =\norm{f_{n_j}-f}_X\to 0,
\]
contradicting the lower bound
$\norm{f_{n_j}}_X\ge1/16$ in \eqref{eq:norm-bounds}.
Thus, $(f_n)$ has no convergent subsequence in $X$. 

By the preliminary observations, \((X,\norm{ \cdot}_X^p)\in\mathcal L\); common compact support gives uniform vanishing at infinity, so the sequence disproves both conjectures.
\end{proof}

\begin{remark}\label{notabsonorm}
The sequence $(f_n)$ is not norm absolutely
equicontinuous. 
Set
\begin{align}\label{defFj}
 F_j:=\bigcup_{n\ge j}\{x\in\R:f_n(x)\ne0\}.
\end{align}
Then,  $F_{j+1}\subseteq F_j\subseteq Q$ 
and 
\begin{align*}
    |F_j|
    &\quad \le
    \sum_{n\ge j}|\{x\in\R:f_n(x)\ne0\}|
  \stackrel{\eqref{1.3}}{  \leq } 
   \sum_{n\ge j}
   |\{x\in Q:e_{A_n}(x)=1\}|
  \\ &  \stackrel{\eqref{eq:support-measure}, \eqref{defdnrange}}{  \leq } 
    \sum_{n\ge j}
    2^{-n}
    =2^{1-j}\to0 \qquad\text{as }j\to\infty.
\end{align*}
Since $\bigcap_{k\ge1}F_k\subseteq F_j$ for every $j\geq 1$,
$ 0\le\left|\bigcap_{k\ge1}F_k\right|
 \le|F_j|\to 0.
$ 
Thus, $\bigcap_{k\ge1}F_k$ has measure zero.
Moreover,
$ 
 \one_{F_j}(x)\downarrow
 \one_{\bigcap_{k\ge1}F_k}(x)
$  for $x\in\R$, 
i.e., 
$\one_{F_j}\downarrow0$ almost everywhere. 
For every $n\ge j$, 
$\{x:f_n(x)\ne0
\}
\stackrel{\eqref{defFj}}{\subseteq}  F_j$.
Thus, $f_n$ vanishes outside $F_j$, and consequently
$ 
 f_n\one_{F_j}=f_n.
$ 
Hence,
\[
\begin{aligned} \sup_{n\ge1}\norm{f_n\one_{F_j}}_X
 \ge\sup_{n\ge j}\norm{f_n\one_{F_j}}_X =\sup_{n\ge j}\norm{f_n}_X
  \ge
 \norm{f_j}_X  \stackrel{\text{Thm.\ref{thm:main}}
 }{\ge} 
 \frac1{16}.
\end{aligned}
\]
Thus, the family is not norm absolutely equicontinuous.
This does not contradict order continuity of $X$:
for each fixed $n$, one has
$\norm{f_n\one_{F_j}}_X\to0$ as $j\to\infty$,
but the convergence is not uniform in $n$.

\end{remark}

\section*{Acknowledgments}
The authors acknowledge the assistance of ChatGPT 6 Astra. Suggestions obtained through interactions with this AI system were subsequently examined, reformulated, incorporated into the manuscript, and
independently verified by the authors. The authors take full responsibility
for all mathematical content in the final manuscript.


\begin{thebibliography}{99}


\bibitem{Arzela1895}
C. Arzel\`a,
\textit{Sulle funzioni di linee}, Mem. Accad. Sci. Ist. Bologna Cl. Sci. Fis. Mat.
 \textbf{5}(5) (1895), 55--74.

\bibitem{Ascoli1884}
G. Ascoli,
\textit{Le curve limite di una variet\`a data di curve},
Atti della R. Accad. Dei Lincei Memorie della Cl. Sci. Fis. Mat. Nat. (III) 
 \textbf{18} (1884), 521--586.


\bibitem{MR928802}
C. Bennett, R. Sharpley,
\textit{Interpolation of Operators},
Pure and Applied Mathematics, vol.~129,
Academic Press, Inc., Boston, MA, 1988.

\bibitem{Brudnyi2015}
Yu. Brudnyi,
\textit{Compactness criteria for spaces of measurable functions},
St. Petersburg Math. J. \textbf{26}(1) (2015), 49--68.
Republication of the English-language original in
Algebra i Analiz \textbf{26}(1) (2014),  68--93.


\bibitem{CaetanoGogatishviliOpic2016}
A. Caetano, A. Gogatishvili, B. Opic,
\textit{Compactness in quasi-Banach function spaces and
applications to compact embeddings of Besov-type spaces},
Proc. Roy. Soc. Edinburgh Sect. A
\textbf{146}(5) (2016), 905--927.


\bibitem{ConejeroMullerPeris2007}
J.~A. Conejero, V. M{\"u}ller, A. Peris,
\textit{Hypercyclic behaviour of operators in a
hypercyclic $C_0$-semigroup},
J. Funct. Anal. \textbf{244}(1) (2007), 342--348.




\bibitem{Frechet1937}M. Fr\'echet,\textit{Sur les ensembles compacts de fonctions de carr\'es sommables},Acta Sci. Math. (Szeged) \textbf{8} (1937), 116--126.

\bibitem{Frink1937}
A.~H. Frink,
\textit{Distance functions and the metrization problem},
Bull. Amer. Math. Soc. \textbf{43}(2) (1937),  133--142.





\bibitem{GuoZhao2020}
W. Guo, G. Zhao,
\textit{On relatively compact sets in quasi-Banach function spaces},
Proc. Amer. Math. Soc. \textbf{148}(8) (2020), 3359--3373.


\bibitem{HancheOlsenHolden2010}
H. Hanche-Olsen, H. Holden,
\textit{The Kolmogorov--Riesz compactness theorem},
Expo. Math. \textbf{28}(4) (2010), 385--394.


\bibitem{HuangNessipbayevSukochevZanin2025}
J. Huang, Y. Nessipbayev, F. Sukochev, D. Zanin,
\textit{Compactness criteria in quasi-Banach symmetric operator
spaces associated with a non-commutative torus},
J. Funct. Anal. \textbf{289}(5) (2025), 
Paper No.~110946, 35~pp.


\bibitem{Kalton1980}
N.~J. Kalton,
\textit{Linear operators on $L_p$ for $0<p<1$},
Trans. Amer. Math. Soc. \textbf{259}(2) (1980),  319--355.


\bibitem{Kolmogoroff1931}
A.~N. Kolmogoroff,
\textit{{\"U}ber die Kompaktheit der Funktionenmengen
bei der Konvergenz im Mittel},
Nachr. Ges. Wiss. G\"ottingen Math.-Phys. Kl.
\textbf{1931} (1931), 60--63.

\bibitem{Krotov2012}
V.~G. Krotov,
\textit{Criteria for compactness in $L^p$-spaces, $p\geq0$},
Sb. Math. \textbf{203}(7) (2012), 1045--1064.





\bibitem{Pisier2016}
G. Pisier,
\textit{Martingales in Banach Spaces},
Cambridge Studies in Advanced Mathematics, vol.~155,
Cambridge University Press, Cambridge, 2016.


\bibitem{Riesz1933}
M. Riesz,
\textit{Sur les ensembles compacts de fonctions sommables},
Acta Sci. Math. (Szeged) \textbf{6} (1933), 136--142.

\bibitem{Rolewicz1972}
S. Rolewicz,
\textit{Metric Linear Spaces},
Monografie Matematyczne, vol.~56,
PWN--Polish Scientific Publishers, Warsaw, 1972.







\end{thebibliography}
\end{document}